\theoremstyle{plain}
\newcommand{\con}{\operatorname{con}}
\newcommand{\Hom}{\operatorname{Hom}}
\newcommand{\End}{\operatorname{End}}
\newcommand{\Ext}{\operatorname{Ext}}
\newcommand{\Mod}{\operatorname{Mod}}
\newcommand{\ind}{\operatorname{ind}}
\newcommand{\res}{\operatorname{res}}
\newcommand{\cores}{\operatorname{cores}}
\newcommand{\Vect}{\operatorname{Vec}}
\newtheorem{theorem}{Theorem}[section]
\newtheorem{corollary}[theorem]{Corollary}
\newtheorem{lemma}[theorem]{Lemma}
\newtheorem{proposition}[theorem]{Proposition}
\newtheorem{definition}[theorem]{Definition}
\theoremstyle{definition}
\newtheorem{remark}[theorem]{Remark}
\title{\textbf{Duals in natural characteristic}}
\author{Peter Schneider, Claus Sorensen}
\date{}
\begin{document}
\maketitle

\begin{abstract}
In this article we introduce a derived smooth duality functor $R\underline{\Hom}(-,k)$ on the unbounded derived category $D(G)$ of smooth $k$-representations of a $p$-adic Lie group $G$. Here 
$k$ is a field of characteristic $p$. Using this functor we relate various subcategories of admissible complexes in $D(G)$. 
\end{abstract}

\section{Introduction}

Let $G$ be a $p$-adic Lie group of dimension $d$, and let $k$ be a field of characteristic $p$. We denote by $\Mod(G)$ the abelian category of smooth $G$-representations in $k$-vector spaces.


In this paper we endow the unbounded derived category $D(G)=D(\Mod(G))$ with a tensor product $\otimes_k$ plus internal homs $R\underline{\Hom}$, and 
begin exploring the resulting closed symmetric monoidal category. The duality functor $R\underline{\Hom}(-,k)$ is of particular interest to us. It gives a derived approach to the higher smooth duality functors $S^j$ introduced by Kohlhaase in \cite{Koh}.

Our first result (Proposition \ref{prop:SSduality}) shows that the functors $S^j$ are compatible with duals on the Hecke side. If $H_U$ denotes the Hecke algebra of a torsion free
open pro-$p$ subgroup $U \subseteq G$, we give an $H_U$-equivariant spectral sequence with $E_2$-page $H^i(U,S^j(V))$ converging to the twisted dual Hecke modules
$H^{d-i-j}(U,V)^{\vee}(\chi_G)$. Here the character $\chi_G: G \rightarrow k^{\times}$ turns out to coincide with the duality character in \cite{Koh}. This is a non-trivial fact and we give a proof. In particular $\chi_G=1$ if $G$ is an open subgroup of the $\frak{F}$-points of a connected reductive group over a $p$-adic field $\frak{F}$.

Motivated by \cite{DGA}, which gives a differential graded version of the Hecke algebra $H_U^{\bullet}$ along with an equivalence between $D(G)$ and the derived category $D(H_U^{\bullet})$ of differential graded modules over $H_U^{\bullet}$, we turn to studying the functor $R\underline{\Hom}(-,k)$ in the derived setting. 

We first observe that $R\underline{\Hom}(-,k)$ is involutive on the subcategory $D_{adm}(G)$ of complexes $V^{\bullet}$ with admissible cohomology representations 
$h^i(V^{\bullet})$ for all $i\in \Bbb{Z}$. We then introduce a possibly larger subcategory 
$$
D(G)^a \supseteq D_{adm}(G)
$$
consisting of globally admissible complexes, by which we mean $H^i(U,V^{\bullet})$ is finite-dimensional for all $i\in \Bbb{Z}$. As we show in Proposition \ref{prop:involutive2}, a complex $V^{\bullet}$ belongs to $D(G)^a$ precisely when the natural 
biduality morphism 
$$
\eta_{V^{\bullet}}: V^{\bullet} \longrightarrow R\underline{\Hom}(R\underline{\Hom}(V^{\bullet},k),k)
$$
is a quasi-isomorphism. As a result, the notion of being globally admissible is independent of the choice of $U$. Finally we show that a globally admissible $V^{\bullet}$ satisfying various boundedness conditions actually lies in the subcategory $D_{adm}(G)$. For instance, Corollary \ref{coro:finitedim2} tells us $D_{adm}^b(G)$ contains exactly those  complexes $V^{\bullet}$ whose total cohomology $H^*(U,V^{\bullet})$ is finite-dimensional. 

To orient the reader we point out that $D(G)^a$ is equivalent to the category $D_{fin}(H_U^{\bullet})$ of differential graded $H_U^{\bullet}$-modules with finite-dimensional cohomology spaces in each degree. We have work in progress aiming at an intrinsic description of the duality functor on $D(H_U^{\bullet})$ corresponding to $R\underline{\Hom}(-,k)$.


\section{Higher smooth duality}

For any compact open subgroup $K \subseteq G$ we have the completed group ring $\Omega(K)$ of $K$ over $k$. This is a noetherian ring (cf.\ \cite{pLG} Thm.\ 33.4). We let $\Mod(\Omega(K))$ denote the abelian category of left $\Omega(K)$-modules. But $\Omega(K)$ also is a pseudocompact ring (cf.\ \cite{pLG} IV\S19). We therefore also have the abelian category $\Mod_{pc}(\Omega(K))$ of pseudocompact left $\Omega(K)$-modules together with the obvious forgetful functor $\Mod_{pc}(\Omega(K)) \rightarrow \Mod(\Omega(K))$. Both categories have enough projective objects. Any finitely generated $\Omega(K)$-module $M$ is pseudocompact in a natural way. This leads to the natural isomorphism
\begin{equation}\label{f:Ext-iso}
  \Ext_{\Mod_{pc}(\Omega(K))}^*(M,N) \cong \Ext_{\Mod(\Omega(K))}^*(M,N)
\end{equation}
for any finitely generated module $M$ in $\Mod(\Omega(K))$ and any pseudocompact module $N$ in $\Mod_{pc}(\Omega(K))$.

Pontrjagin duality gives rise to the equivalence of categories
\begin{align*}
  \Mod(K)^{op} & \xrightarrow{\;\simeq\;} \Mod_{pc}(\Omega(K)) \\
        V & \longmapsto V^\vee := \Hom_k(V,k)
\end{align*}
where, of course, in order to make $V^\vee$ a left module we use the inversion map $g \mapsto g^{-1}$ on $K$. In particular, we have the natural isomorphisms
\begin{equation*}
  \Ext_{\Mod(K)}^*(V_1,V_2) \cong \Ext_{\Mod_{pc}(\Omega(K))}^*(V_2^\vee,V_1^\vee) \ .
\end{equation*}
If we apply this with the trivial $K$-representation $V_2 := k$ and use \eqref{f:Ext-iso} we obtain the natural isomorphism
\begin{equation}\label{f:K-Kohlhaase}
  \Ext_{\Mod(K)}^*(V,k) \cong \Ext_{\Mod(\Omega(K))}^*(k,V^\vee)
\end{equation}
for $V$ in $\Mod(K)$.

If $K' \subseteq K$ is another open subgroup then in \eqref{f:K-Kohlhaase} we have on both sides the obvious restriction maps. Hence we may pass to the inductive limit
\begin{equation}\label{f:Kohlhaase}
  \varinjlim_K \Ext_{\Mod(K)}^*(V,k) \cong \varinjlim_K \Ext_{\Mod(\Omega(K))}^*(k,V^\vee) \ .
\end{equation}
Note that, for $V$ in $\Mod(G)$, the right hand side are Kohlhaase's higher smooth dual functors
\begin{equation*}
  S^*(V) := \varinjlim_K \Ext_{\Mod(\Omega(K))}^*(k,V^\vee)
\end{equation*}
in \cite{Koh}. We use the left hand side to understand these as derived functors. For any $V_1, V_2$ in $\Mod(G)$ we introduce
\begin{align*}
  \underline{\Hom}(V_1,V_2) & := \{f \in \Hom_k(V_1,V_2) : f \ \text{is $K$-equivariant} \\
   & \qquad\qquad\qquad\qquad \text{for some compact open subgroup $K \subseteq G$} \}.
\end{align*}
Via the $G$-action defined by ${^g f} := g f (g^{-1} -)$, for $g \in G$, this is again an object in $\Mod(G)$. Since the functors
\begin{align*}
  \Mod(G) & \longrightarrow \Mod(G) \\
      V_2 & \longmapsto \underline{\Hom}(V_1, V_2)
\end{align*}
are left exact we have the corresponding right derived functors
\begin{equation*}
  \underline{\Ext}^i(V_1, V_2)   \qquad\text{for $i \geq 0$}.
\end{equation*}

\begin{lemma}\phantomsection\label{lemma:Ext}
\begin{itemize}
   \item[i.] If $V_2$ is injective in $\Mod(G)$ then $\underline{\Hom}(V_1,V_2)$ is $H^0(U,-)$-acyclic for any compact open subgroup $U \subseteq G$.
   \item[ii.] $\underline{\Ext}^*(V_1,V_2) = \varinjlim_K \Ext_{\Mod(K)}^*(V_1,V_2)$.
\end{itemize}
\end{lemma}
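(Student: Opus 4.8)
The plan is to derive both parts from one structural observation: $\underline{\Hom}(V_1,-)$ is right adjoint to $-\otimes_k V_1$ (it is the internal hom of the monoidal category $(\Mod(G),\otimes_k)$). Concretely, the adjunction $\Hom_{\Mod(G)}(V_0\otimes_k V_1,V_2)\cong\Hom_{\Mod(G)}(V_0,\underline{\Hom}(V_1,V_2))$ sends a $G$-equivariant bilinear map $f\colon V_0\otimes_k V_1\to V_2$ to the assignment $v_0\mapsto f(v_0\otimes-)$; the only point to check is that $f(v_0\otimes-)\colon V_1\to V_2$ lies in the smooth part $\underline{\Hom}(V_1,V_2)$, which holds because $v_0$ is fixed by some compact open $K$ and then $f(v_0\otimes-)$ is $K$-equivariant, the inverse construction being immediate. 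Since $k$ is a field, $-\otimes_k V_1$ is exact, so $\underline{\Hom}(V_1,-)$ preserves injectives.

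For part (i) this already suffices: if $V_2$ is injective in $\Mod(G)$ then so is $\underline{\Hom}(V_1,V_2)$; moreover $\res^G_U$ is exact and, being right adjoint to the exact functor $\ind^G_U$ (compact induction), carries injectives of $\Mod(G)$ to injectives of $\Mod(U)$. Hence $\res^G_U\underline{\Hom}(V_1,V_2)$ is injective in $\Mod(U)$, so $H^i(U,\underline{\Hom}(V_1,V_2))=0$ for all $i>0$.

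For part (ii) I would fix an injective resolution $V_2\xrightarrow{\;\sim\;}I^\bullet$ in $\Mod(G)$, so that $\underline{\Ext}^*(V_1,V_2)=h^*\bigl(\underline{\Hom}(V_1,I^\bullet)\bigr)$ by the very definition of the right derived functor. The bridge to the level-$K$ side is the tautology $\underline{\Hom}(V_1,V_2)^K=\Hom_{\Mod(K)}(V_1,V_2)$ for every compact open $K\subseteq G$: under the action ${}^gf=gf(g^{-1}-)$, a $k$-linear map $V_1\to V_2$ is $K$-fixed precisely when it is $K$-equivariant. Because $\res^G_K$ is exact and preserves injectives (same argument as above), $\res^G_K I^\bullet$ is an injective resolution of $\res^G_K V_2$ in $\Mod(K)$, whence $\Ext^*_{\Mod(K)}(V_1,V_2)=h^*\bigl(\underline{\Hom}(V_1,I^\bullet)^K\bigr)$; and for $K'\subseteq K$ this identification intertwines the restriction map on $\Ext$'s with the inclusion $\underline{\Hom}(V_1,I^\bullet)^K\hookrightarrow\underline{\Hom}(V_1,I^\bullet)^{K'}$, since restricting the resolution $\res^G_K I^\bullet$ to $K'$ gives $\res^G_{K'} I^\bullet$.

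Finally I would pass to the filtered colimit over the compact open subgroups $K$. As filtered colimits of $k$-vector spaces are exact, they commute with $h^*$, so $\varinjlim_K\Ext^*_{\Mod(K)}(V_1,V_2)=h^*\bigl(\varinjlim_K\underline{\Hom}(V_1,I^\bullet)^K\bigr)$; and since each $\underline{\Hom}(V_1,I^n)$ is a smooth $G$-representation, $\varinjlim_K\underline{\Hom}(V_1,I^n)^K=\underline{\Hom}(V_1,I^n)$, giving $\varinjlim_K\Ext^*_{\Mod(K)}(V_1,V_2)=h^*\bigl(\underline{\Hom}(V_1,I^\bullet)\bigr)=\underline{\Ext}^*(V_1,V_2)$. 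I expect the only real points needing care to be the two injective-preservation statements and the compatibility of the level-lowering maps on both sides; the colimit manipulations are routine bookkeeping.
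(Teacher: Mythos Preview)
Your argument is correct. The paper's own proof is a one-line citation: it notes that injectives in $\Mod(G)$ restrict to injectives in $\Mod(U)$ and then invokes Prop.\ 2.2 of Verdier's appendix in \cite{CG}. You have essentially written out the content of that reference by hand, with one pleasant upgrade.

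The difference worth highlighting is your treatment of part~(i). The statement only asks that $\underline{\Hom}(V_1,V_2)$ be $H^0(U,-)$-acyclic, and the Verdier-style argument would typically establish exactly that. You instead observe that the tensor--hom adjunction $\Hom_{\Mod(G)}(V_0\otimes_k V_1,V_2)\cong\Hom_{\Mod(G)}(V_0,\underline{\Hom}(V_1,V_2))$ (which the paper records later as \eqref{f:smooth-adj}) makes $\underline{\Hom}(V_1,-)$ a right adjoint to an exact functor, so $\underline{\Hom}(V_1,V_2)$ is outright \emph{injective} in $\Mod(G)$, and hence in $\Mod(U)$ after restriction. This is strictly stronger than acyclicity and makes the composed-functor spectral sequence of Prop.~\ref{prop:SS} immediate. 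For part~(ii) your argument---restrict a $G$-injective resolution, identify $K$-fixed vectors of $\underline{\Hom}$ with $\Hom_{\Mod(K)}$, and pass to the filtered colimit using exactness---is the standard one and matches what the cited reference does; the compatibility with the transition maps is indeed the only point requiring a moment's care, and you have it right.
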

\begin{proof}
Note that any injective object in $\Mod(G)$ remains injective when viewed in $\Mod(U)$. Therefore this is Prop.\ 2.2 in the appendix by Verdier in \cite{CG}.
\end{proof}

We see that, in particular, we can rewrite Kohlhaase's functors as the derived functors
\begin{equation*}
  S^*(V) = \underline{\Ext}^*(V,k) \ .
\end{equation*}

\begin{remark}\label{rem:global-dim}
By \cite{Bru} Thm.\ 4.1 the global dimension of $\Omega(K)$ as a pseudocompact ring is equal to the cohomological dimension of $K$. By Lazard (cf.\ \cite{CG} I-47) the latter is equal to $d$ provided $K$ is pro-$p$ and torsion free. Since $G$ contains arbitrarily small open pro-$p$ subgroups without torsion we conclude from Lemma \ref{lemma:Ext}.ii  that $\underline{\Ext}^i(V_1, V_2) = 0$ for any $i > d$.
\end{remark}

\begin{proposition}\label{prop:SS}
   For any compact open subgroup $U \subseteq G$ we have the $E_2$-spectral sequence
\begin{equation*}
  H^i(U,\underline{\Ext}^j(V_1,V_2)) \Longrightarrow \Ext_{\Mod(U)}^{i+j}(V_1,V_2) \ .
\end{equation*}
In particular,
\begin{equation*}
  H^i(U,S^j(V)) \Longrightarrow \Ext_{\Mod(U)}^{i+j}(V,k) \ .
\end{equation*}
\end{proposition}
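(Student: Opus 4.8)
The plan is to obtain this as the Grothendieck spectral sequence for the composite of two left exact functors. Fix $V_1$ in $\Mod(G)$ and consider
\[
F := \underline{\Hom}(V_1,-) \colon \Mod(G) \longrightarrow \Mod(G)
\qquad\text{followed by}\qquad
H^0(U,-) \colon \Mod(G) \longrightarrow \Vect_k .
\]
The first step is to identify the composite functor. An element $f$ of $\underline{\Hom}(V_1,V_2)$ is fixed by $U$ under the action ${}^u f = u f(u^{-1}-)$ exactly when $f$ is $U$-equivariant; conversely every $U$-equivariant $k$-linear map $V_1 \to V_2$ already lies in $\underline{\Hom}(V_1,V_2)$, since $U$ is itself a compact open subgroup of $G$. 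Hence $H^0(U,F(V_2)) = \underline{\Hom}(V_1,V_2)^U = \Hom_{\Mod(U)}(V_1,V_2)$, i.e.\ the composite is $\Hom_{\Mod(U)}(V_1,-)$ (with $V_2$ regarded in $\Mod(U)$ by restriction).

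Next I would check the hypotheses of the Grothendieck spectral sequence. The category $\Mod(G)$ has enough injectives; $F$ is left exact, because restricting the codomain of a $K$-equivariant map to a subrepresentation preserves $K$-equivariance, and $H^0(U,-)$ is left exact as well; and $F$ sends injective objects of $\Mod(G)$ to $H^0(U,-)$-acyclic objects, which is precisely Lemma \ref{lemma:Ext}.i. It then remains to name the relevant derived functors. By definition $R^jF = \underline{\Ext}^j(V_1,-)$. Since any injective object of $\Mod(G)$ stays injective upon restriction to $U$ (as already used in the proof of Lemma \ref{lemma:Ext}), an injective resolution of $V_2$ in $\Mod(G)$ restricts to one in $\Mod(U)$; applying $(-)^U$ to it computes group cohomology, so $R^iH^0(U,-) = H^i(U,-)$, and applying $\Hom_{\Mod(U)}(V_1,-)$ to it computes $\Ext^*_{\Mod(U)}(V_1,-)$, so the derived functors of the composite are the $\Ext^*_{\Mod(U)}(V_1,-)$.

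With these inputs in place, the Grothendieck spectral sequence reads $E_2^{i,j} = H^i(U,\underline{\Ext}^j(V_1,V_2)) \Longrightarrow \Ext^{i+j}_{\Mod(U)}(V_1,V_2)$, which is the first assertion; specializing to $V_1 = V$ and $V_2 = k$ and recalling $\underline{\Ext}^j(V,k) = S^j(V)$ gives the second. The only genuinely non-formal ingredient is the acyclicity statement, and that is handed to us by Lemma \ref{lemma:Ext}.i, so I do not expect a serious obstacle; the one place that needs a moment's care is the identification of the composite functor in the first step — specifically, that $U$-equivariance of a $k$-linear map between smooth representations already places that map in $\underline{\Hom}$, so that the composite is exactly $\Hom_{\Mod(U)}(V_1,-)$ with no spurious extra conditions.
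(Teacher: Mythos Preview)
Your argument is correct and is precisely the approach the paper takes: its proof consists of the single sentence ``This is the composed functor spectral sequence which exists by Lemma~\ref{lemma:Ext}.i,'' and you have simply unpacked this by identifying the composite $H^0(U,-)\circ\underline{\Hom}(V_1,-)\cong \Hom_{\Mod(U)}(V_1,-)$ and naming the derived functors involved.
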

\begin{proof}
This is the composed functor spectral sequence which exists by Lemma \ref{lemma:Ext}.i.
\end{proof}

The above spectral sequence has an additional equivariance property which we now describe. We fix a compact open subgroup $U \subseteq G$ and consider the compact induction $\mathbf{X}_U := \ind_U^G(k)$ in $\Mod(G)$. We then have the endomorphism ring $H_U := \End_{\Mod(G)}(\mathbf{X}_U)^{op}$ so that $\mathbf{X}_U$ becomes a right $H_U$-module. Frobenius reciprocity gives a natural isomorphism of functors $H^0(U,-) \cong \Hom_{\Mod(G)}(\mathbf{X}_U,-)$ on $\Mod(G)$. By using injective resolutions it extends to a natural isomorphism of cohomological functors
\begin{equation*}
  H^*(U,-) \cong \Ext_{\Mod(G)}^*(\mathbf{X}_U,-) \ .
\end{equation*}
Through its right action on $\mathbf{X}_U$ the right hand side becomes a left $H_U$-module. In this way $H^*(U,-)$ is equipped with a left $H_U$-module structure. In particular, $\Hom_{\Mod(U)}(V_1,V_2) = H^0(U, \underline{\Hom}(V_1,V_2)) \cong \Hom_{\Mod(G)}(\mathbf{X}_U,\underline{\Hom}(V_1,V_2))$ carries a left $H_U$-module structure which is functorial in $V_1$ and $V_2$. By derivation we obtain a functorial left $H_U$-module structure on $\Ext_{\Mod(U)}^*(V_1,V_2)$.

\begin{remark}\label{rem:equiv}
   The spectral sequence in Prop.\ \ref{prop:SS} is $H_U$-equivariant.
\end{remark}
\begin{proof}
This is straightforward from the way the composed functor spectral sequence is constructed.
\end{proof}

We now suppose in addition that $U$ is pro-$p$ and torsion free. Then $U$ is a Poincar\'e group of dimension $d$ (\cite{CG} I-47 Ex.\ (3)). A straightforward variant of the appendix by Verdier in \cite{CG} therefore gives the following: In $\Mod(U)$ we have the dualizing object
\begin{equation*}
  \hat{I} := \varinjlim_{K \subseteq U, \cores} \Hom_k(H^d(K,k),k) \ ,
\end{equation*}
which actually is isomorphic to the trivial representation $k$ in $\Mod(U)$, together with an isomorphism
\begin{equation}\label{f:duality-iso}
  \Hom_k(H^i(U,V),k) \cong \Ext^{d-i}_{\Mod(U)}(V,\hat{I}) \cong \Ext^{d-i}_{\Mod(U)}(V,k)   \qquad\text{for any $i \geq 0$}
\end{equation}
which is natural in $V$ in $\Mod(U)$; this latter isomorphism is induced by the Yoneda product
\begin{equation*}
  \Ext^{d-i}_{\Mod(U)}(V,\hat{I}) \times H^i(U,V) \longrightarrow H^d(U,\hat{I}) \cong H^d(U,k) (\cong k)
\end{equation*}
(Def.\ 4.5, Prop.\ 3.1.5, and first displayed formula on p.\ V-20). In the following we will keep writing $\hat{I}$ (instead of $k$) but will view it even as a trivial $G$-representation. From now on we assume that $V$ lies in $\Mod(G)$ and we will see that then all terms in the above Yoneda pairing carry a natural left $H_U$-action.
\begin{itemize}
  \item[A.]  From the proof of Prop.\ 8.4.i in \cite{OS} we know a formula for the $H_U$ action on $H^*(U,V)$. Viewing $H_U$ as the convolution algebra of $U$-bi-invariant functions with compact support on $G$ we denote by $\tau_h \in H_U$, for $h \in G$, the characteristic function of the double coset $UhU$ in $G$. The diagram
\begin{equation*}
  \xymatrix{
    H^*(U,V) \ar[d]_{\res} \ar[r]^{\tau_h \cdot} & H^*(U,V)  \\
    H^*(U \cap h^{-1}Uh,V) \ar[r]^{h_\ast} & H^*(U \cap hUh^{-1},V) \ar[u]_{\cores}  }
\end{equation*}
is commutative.
  \item[B.] By \cite{CG} I Prop.\ 18 the same $\hat{I}$ is also a dualizing object in $\Mod(U')$ for any open subgroup $U' \subseteq U$.
  \item[C.] As introduced above, we have a natural left $H_U$-action on $\Ext^*_{\Mod(U)}(V,\hat{I})$. To give an explicit formula we let $V'$ be any other object in $\Mod(G)$ and we first recall that, for any open subgroup $U' \subseteq U$ and any $h \in G$, we have the following natural maps:
      \begin{itemize}
        \item[--] The restriction map $\Ext^*_{\Mod(U)}(V,V') \xrightarrow{\res} \Ext^*_{\Mod(U')}(V,V')$ which derives the obvious forgetful map on homomorphisms.
        \item[--] The corestriction map $\Ext^*_{\Mod(U')}(V,V') \xrightarrow{\cores} \Ext^*_{\Mod(U)}(V,V')$ which derives the map which sends a $U'$-equivariant homomorphism $f : V \rightarrow V'$ to the $U$-equivariant homomorphism $\sum_{g \in U/U'} g f (g^{-1}-) : V \rightarrow V'$.
        \item[--] The conjugation map $\Ext^*_{\Mod(U)}(V,V') \xrightarrow{h_*} \Ext^*_{\Mod(hUh^{-1})}(V,V')$ which derives the map which sends a $U$-equivariant homomorphism $f : V \rightarrow V'$ to the $hUh^{-1}$-equivariant homomorphism $h f(h^{-1} -) : V \rightarrow V'$.
      \end{itemize}
      As for A. it is straightforward to verify that, for any $h \in G$, the diagram
\begin{equation*}
  \xymatrix{
    \Ext^*_{\Mod(U)}(V,V') \ar[d]_{\res} \ar[r]^{\tau_h \cdot} & \Ext^*_{\Mod(U)}(V,V')  \\
    \Ext^*_{\Mod(U \cap h^{-1}Uh)}(V,V') \ar[r]^{h_\ast} & \Ext^*_{\Mod(U \cap hUh^{-1})}(V,V') \ar[u]_{\cores}  }
\end{equation*}
     is commutative.
  \item[E.] It is easily checked that the map
\begin{align*}
  H_U & \longrightarrow H_U \\
  \tau & \longmapsto \tau(-^{-1})
\end{align*}
is an anti-involution of the $k$-algebra $H_U$. It sends $\tau_h$ to $\tau_{h^{-1}}$.
\end{itemize}

\begin{lemma}\label{lemma:pairings}
   For any $0 \leq i \leq d$ and any $h \in G$ the diagram of Yoneda pairings
\begin{equation*}
  \xymatrix{
    \Ext^{d-i}_{\Mod(U)}(V,\hat{I}) \ar[d]_{\res}  & \times  & H^i(U,V) \ar[r] & H^d(U,\hat{I})  \\
    \Ext^{d-i}_{\Mod(U \cap h^{-1}Uh)}(V,\hat{I}) \ar[d]_{h_*}  & \times  & H^i(U \cap h^{-1}Uh,V) \ar[u]_{\cores} \ar[r] & H^d(U \cap h^{-1}Uh,\hat{I}) \ar[u]_{\cores} \ar[d]^{h_*} \\
    \Ext^{d-i}_{\Mod(U \cap hUh^{-1})}(V,\hat{I}) \ar[d]_{\cores}  & \times  & H^i(U \cap hUh^{-1},V) \ar[u]_{h_*^{-1}} \ar[r] & H^d(U \cap hUh^{-1},\hat{I}) \ar[d]^{\cores} \\
    \Ext^{d-i}_{\Mod(U)}(V,\hat{I})  & \times  & H^i(U,V) \ar[u]_{\res} \ar[r] & H^d(U,\hat{I})   }
\end{equation*}
   is commutative.
\end{lemma}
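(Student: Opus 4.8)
The plan is to identify every row of the diagram with an instance of Yoneda composition and then to check three standard compatibilities separately. Using the isomorphism $\hat{I}\cong k$ in $\Mod(U')$ --- valid for each open subgroup $U'\subseteq U$ by item B above --- we write $H^i(U',V)=\Ext^i_{\Mod(U')}(\hat{I},V)$, so that the Yoneda pairing in each row of the diagram becomes the composition map
\[
  \Ext^{d-i}_{\Mod(U')}(V,\hat{I})\times\Ext^i_{\Mod(U')}(\hat{I},V)\longrightarrow\Ext^d_{\Mod(U')}(\hat{I},\hat{I})=H^d(U',\hat{I}) .
\]
The four-row diagram then splits into three adjacent two-row subdiagrams --- the top one with vertical maps $(\res,\cores,\cores)$, the middle one with $(h_*,h_*^{-1},h_*)$, and the bottom one with $(\cores,\res,\cores)$ --- and it suffices to prove that each of these commutes.

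The top and bottom subdiagrams are the projection formula for corestriction, in its two versions $\cores(\res(\xi)\circ\eta)=\xi\circ\cores(\eta)$ and $\cores(\xi\circ\res(\eta))=\cores(\xi)\circ\eta$. To obtain these I would recall that $\res$ is induced by the exact forgetful functor $\Mod(U)\to\Mod(U')$, while $\cores$ arises by composing the adjunction isomorphism $\Hom_{\Mod(U')}(A,B)\cong\Hom_{\Mod(U)}(A,\Ind_{U'}^U B)$ with the counit $\Ind_{U'}^U B\to B$; since $[U:U']<\infty$ the functor $\Ind_{U'}^U$ (which here coincides with coinduction) is exact, so both maps derive directly to the arrows $\res$ and $\cores$ of the diagram on $\Ext$-groups. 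The projection formula is then forced by naturality of the adjunction together with the triangle identities; equivalently, since both sides are morphisms of $\delta$-functors in each variable, one reduces by dimension shifting to the case of $\Hom$-groups, where it is the elementary identity $\sum_{g\in U/U'}g\,(\xi\eta)(g^{-1}-)=\xi\bigl(\sum_{g\in U/U'}g\,\eta(g^{-1}-)\bigr)$, which holds because $\xi$ is $U$-equivariant with values in the trivial representation $\hat{I}$ (and symmetrically for the other version).

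The middle subdiagram is the compatibility of Yoneda composition with the conjugation map. Conjugation by $h$ is an equivalence of abelian categories $c_h\colon\Mod(U\cap h^{-1}Uh)\xrightarrow{\;\sim\;}\Mod(U\cap hUh^{-1})$, and for every $G$-representation $W$ the map $c_h(W)\to W$ that on underlying spaces is $w\mapsto hw$ for the ambient $G$-action is an isomorphism of $(U\cap hUh^{-1})$-representations, natural in $W$; composing $c_h$ with these trivializations recovers the map $h_*$ of item C above on $\Hom$-groups, hence, after deriving, on all $\Ext$-groups. Being induced by $c_h$ followed by transport along the natural isomorphisms $c_h(-)\cong(-)$, the map $h_*$ commutes with Yoneda composition, $h_*(\xi\circ\beta)=h_*(\xi)\circ h_*(\beta)$. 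Taking $\beta=h_*^{-1}(\eta)$ yields precisely the commutativity of the middle subdiagram; note that on the trivial $G$-representation $\hat{I}$ the trivialization is the identity, so that in the right-hand column $h_*$ is simply the conjugation isomorphism $H^d(U\cap h^{-1}Uh,\hat{I})\xrightarrow{\sim}H^d(U\cap hUh^{-1},\hat{I})$ appearing there.

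The one point requiring genuine care is that $\cores$, viewed as a map of $\Ext$-groups, is compatible with Yoneda composition --- that is, the promotion of the degree-zero projection formula to all cohomological degrees. The adjunction-theoretic description above is the cleanest route, since it makes $\cores$ manifestly functorial and reduces the claim to the triangle identities; one need only check that every functor in play (the forgetful functor, the finite induction $\Ind_{U'}^U$, and $c_h$) is exact, so that nothing has to be derived and inside $D(\Mod(U'))$ the Yoneda product is plain composition of morphisms. Granting this, what remains --- bookkeeping of the $g^{-1}$-twists in the various module structures and of the compatibility of the identification $\hat{I}\cong k$ with restriction and with $c_h$ (item B) --- is routine.
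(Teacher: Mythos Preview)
Your proof is correct and follows exactly the same three-rectangle decomposition as the paper. The difference is one of packaging. The paper fixes injective resolutions $V \xrightarrow{\simeq} \mathcal{J}^\bullet$ and $\hat{I} \xrightarrow{\simeq} \mathcal{I}^\bullet$ in $\Mod(G)$ (so that they restrict to injective resolutions for every open subgroup) and then verifies each rectangle by a direct chain-level computation with maps $\alpha^\bullet,\beta^\bullet$ representing the Ext classes: for the top rectangle, for instance, it simply writes out
\[
\beta^\bullet[i]\circ\!\!\sum_{g\in U/U\cap h^{-1}Uh}\!\!{}^g\alpha^\bullet \;=\; \sum_g {}^g(\beta^\bullet[i]\circ\alpha^\bullet)
\]
using the $U$-equivariance of $\beta^\bullet$. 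This is precisely your ``elementary identity'' for the projection formula, but carried out directly on the injective resolution rather than reduced to degree zero by dimension shifting or deduced from an adjunction. The paper's route has the small practical advantage that one never has to sort out which adjoint $\Ind_{U'}^U$ is, or how $\cores$ derives, since on homotopy classes of chain maps between injectives everything is already explicit; your route is more conceptual and makes clearer why the identities hold. For the middle rectangle both arguments are the same observation that $h_*$ comes from an exact equivalence and therefore commutes with Yoneda composition.
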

\begin{proof} We fix injective resolutions $V \xrightarrow{\simeq} \mathcal{J}^\bullet$ and $\hat{I} \xrightarrow{\simeq} \mathcal{I}^\bullet$ in $\Mod(G)$, which remain injective resolutions after restriction to any given open subgroup of $G$.

\textit{The upper rectangle:} Let $\beta^\bullet : \mathcal{J}^\bullet \rightarrow \mathcal{I}^\bullet[d-i]$ a $U$-equivariant and $\alpha^\bullet : \mathcal{I}^\bullet \rightarrow \mathcal{J}^\bullet[i]$ a $U \cap h^{-1}Uh$-equivariant homomorphism of complexes representing classes $[\beta^\bullet] \in \Ext^{d-i}_{\Mod(U)}(V,\hat{I})$ and $[\alpha^\bullet] \in H^i(U,V)$, respectively. Then $\beta^\bullet$ also represents $\res [\beta^\bullet]$ whereas $\cores [\alpha^\bullet]$ is represented by $\sum_{g \in U / U \cap h^{-1}Uh} {^g \alpha}^\bullet$. We compute
\begin{align*}
  [\beta^\bullet[i]] \circ \cores [\alpha^\bullet] & = [\beta^\bullet[i] \circ \sum_{g \in U / U \cap h^{-1}Uh} {^g \alpha}^\bullet] = [\beta^\bullet[i] \circ \sum_{g \in U / U \cap h^{-1}Uh} g \alpha^\bullet(g^{-1} -)]   \\
     & = [\sum_{g \in U / U \cap h^{-1}Uh} g(\beta^\bullet[i] \circ \alpha^\bullet)(g^{-1} -)] = [\sum_{g \in U / U \cap h^{-1}Uh} {^g (}\beta^\bullet[i] \circ \alpha^\bullet)]  \\
     & = \cores (\res[\beta^\bullet[i]] \circ [\alpha^\bullet]) \ .
\end{align*}

\textit{The middle rectangle:} Let $\beta^\bullet : \mathcal{J}^\bullet \rightarrow \mathcal{I}^\bullet[d-i]$ a $U \cap h^{-1}Uh$-equivariant and $\alpha^\bullet : \mathcal{I}^\bullet \rightarrow \mathcal{J}^\bullet[i]$ a $U \cap hUh^{-1}$-equivariant homomorphism of complexes representing classes $[\beta^\bullet] \in \Ext^{d-i}_{\Mod(U) \cap h^{-1}Uh}(V,\hat{I})$ and $[\alpha^\bullet] \in H^i(U \cap hUh^{-1},V)$, respectively. The $h_*[\beta^\bullet]$ and $h_*^{-1}[\alpha^\bullet]$ are represented by ${^h \beta}^\bullet$ and ${^{h^{-1}} \alpha}^\bullet$. We compute
\begin{align*}
  h_*[\beta^\bullet[i]] \circ [\alpha^\bullet] & = [{^h \beta}^\bullet[i] \circ \alpha^\bullet] = [h\beta^\bullet[i](h^{-1}\alpha^\bullet(-))] = [h\beta^\bullet[i](h^{-1}\alpha^\bullet(h h^{-1}-))]  \\
  & = [h\beta^\bullet[i]({^{h^{-1}} \alpha^\bullet}(h^{-1}-))] = [h (\beta^\bullet[i] \circ {^{h^{-1}} \alpha^\bullet})(h^{-1}-)] = [{^h (}\beta^\bullet[i] \circ {^{h^{-1}} \alpha^\bullet})]   \\
  & = h_* ([\beta^\bullet[i]] \circ h_*^{-1}[\alpha^\bullet]) \ .
\end{align*}

\textit{The lower rectangle:} This is entirely analogous to the computation for the upper rectangle.
\end{proof}

At this point we fix an isomorphism $H^d(U,\hat{I}) \cong k$ and henceforth treat it as an identification. Since, for any other open pro-$p$ and torsion free subgroup $U' \subseteq G$, the corestriction maps $H^d(U',\hat{I}) \xleftarrow{\cong} H^d(U' \cap U,\hat{I}) \xrightarrow{\cong} H^d(U,\hat{I})$ are isomorphisms (cf.\ \cite{CG} I-50(4)), this induces a corresponding identification $H^d(U',\hat{I}) = k$. In particular, under these identifications the conjugation isomorphism $h_* : H^d(U \cap h^{-1}Uh,\hat{I}) \xrightarrow{\cong} H^d(U \cap hUh^{-1},\hat{I})$, for any $h \in G$, becomes the multiplication by a scalar $\chi_G(h) \in k^\times$.

\begin{lemma}\label{lemma:chiG}
   The map $\chi_G : G \rightarrow k^\times$ is a character which is independent of $U$ and is trivial on any pro-$p$ subgroup of $G$.
\end{lemma}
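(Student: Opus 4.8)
Here is the plan I would follow.

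\medskip
\noindent\textit{Proof plan.} The idea is to replace the definition of $\chi_G$ by a single formula valid for \emph{all} small open subgroups simultaneously, and then to read off the three assertions from it. First I would set up notation: for every open pro-$p$ torsion-free subgroup $P\subseteq G$ let $\mu_P\colon H^d(P,\hat I)\xrightarrow{\;\cong\;}k$ be the isomorphism obtained from our fixed $H^d(U,\hat I)\cong k$ by corestriction through $P\cap U$, as in the paragraph preceding the statement. By transitivity of corestriction together with \cite{CG} I-50(4) these satisfy $\mu_P\circ\cores=\mu_Q$ whenever $Q\subseteq P$; moreover any other such compatible family differs from $\{\mu_P\}$ by a single global scalar in $k^\times$ (if $\nu_P=c_P\mu_P$ is compatible, then $c_P=c_Q$ for $Q\subseteq P$, and any two of these $P$ share a common open subgroup, so $c_P$ is constant). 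I would also record the two standard compatibilities of the conjugation maps, with $\hat I$ viewed as a trivial $G$-module: $(h_1h_2)_*=(h_1)_*\circ(h_2)_*$, and $h_*\circ\cores^P_Q=\cores^{hPh^{-1}}_{hQh^{-1}}\circ h_*$ for $Q\subseteq P$.

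\medskip
The crux of the argument is the claim that for every open pro-$p$ torsion-free $P\subseteq G$, every $h\in G$ and every $x\in H^d(P,\hat I)$,
\begin{equation*}
  \mu_{hPh^{-1}}\bigl(h_*x\bigr)=\chi_G(h)\,\mu_P(x);
\end{equation*}
in other words the ``window'' $U\cap h^{-1}Uh$ used to define $\chi_G(h)$ may be replaced by any such $P$. For $P\subseteq U\cap h^{-1}Uh$ this is immediate from the definition of $\chi_G(h)$ together with the two compatibilities above: one corestricts from $P$ up to $U\cap h^{-1}Uh$, where the defining relation applies, and uses that $\mu_{hPh^{-1}}=\mu_{U\cap hUh^{-1}}\circ\cores$ and that $h_*$ commutes with this $\cores$. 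For general $P$ one passes to $P\cap U\cap h^{-1}Uh$ and uses that corestriction is an \emph{isomorphism} on $H^d$ of these Poincaré groups to transport the equality back up to $P$. This bookkeeping — keeping straight which identification is in force at each stage — is where I expect the real work to lie; everything after it is formal.

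\medskip
Granting the formula, the three statements follow quickly. For \emph{independence of $U$} (and of the auxiliary isomorphism $H^d(U,\hat I)\cong k$): a different choice replaces $\{\mu_P\}$ by $c\cdot\{\mu_P\}$ for a single $c\in k^\times$, and $c$ cancels on both sides of the displayed formula, so the character is unchanged. For \emph{multiplicativity}: apply the formula to $h_2$ with window $P$, then to $h_1$ with window $h_2Ph_2^{-1}$ (again open pro-$p$ torsion-free), and combine using $(h_1h_2)_*=(h_1)_*\circ(h_2)_*$ and $h_1(h_2Ph_2^{-1})h_1^{-1}=(h_1h_2)P(h_1h_2)^{-1}$; comparing with the formula for $h_1h_2$ and cancelling $\mu_P(x)$ for $x\neq0$ gives $\chi_G(h_1h_2)=\chi_G(h_1)\chi_G(h_2)$.

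\medskip
For \emph{triviality on pro-$p$ subgroups}: when $h\in U$ the window is $U$ itself and $hUh^{-1}=U$, so $\chi_G(h)$ is the scalar of $h_*\colon H^d(U,\hat I)\to H^d(U,\hat I)$, which is the conjugation action of $h\in U$ on group cohomology with trivial coefficients, hence the identity; thus $\chi_G|_U=1$. Finally, for an arbitrary closed pro-$p$ subgroup $P_0\subseteq G$, the intersection $P_0\cap U$ is open of $p$-power index in $P_0$, so $\chi_G|_{P_0}$ factors through a finite $p$-group quotient of $P_0$; since $k$ has characteristic $p$, the group $k^\times$ has no nontrivial element of $p$-power order, and therefore $\chi_G|_{P_0}=1$. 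This completes the proof modulo the verification of the displayed formula, which is the single point requiring care.
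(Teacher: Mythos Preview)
Your argument is correct and rests on the same ingredients as the paper's proof: the compatibility $h_*\circ\cores=\cores\circ h_*$, the fact that corestriction is an isomorphism on $H^d$ between open subgroups of $U$ (\cite{CG} I-50(4)), the triviality of inner conjugation on group cohomology, and the absence of $p$-torsion in $k^\times$.

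The organizational difference is that you first establish the universal formula $\mu_{hPh^{-1}}(h_*x)=\chi_G(h)\mu_P(x)$ for \emph{every} open pro-$p$ torsion-free $P$, and then read off independence of $U$ and multiplicativity as immediate corollaries (multiplicativity becomes a two-line iteration). The paper instead verifies multiplicativity directly via three explicit commutative diagrams tracking $g_*$, $h_*$, and $(gh)_*$ through the specific intersections $U\cap gUg^{-1}\cap ghU(gh)^{-1}$ etc., with corestriction arrows connecting them. Your packaging is arguably cleaner, since the single displayed formula absorbs all the diagram-chasing once and for all; the paper's version has the advantage of being completely explicit about which subgroups and maps are in play at each step, which is the ``bookkeeping'' you yourself flag as the real work. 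Either way the content is the same.
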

\begin{proof}
The independence from the chosen identification $H^d(U,\hat{I}) = k$ as well as he triviality on $U$ are obvious. The former implies the independence from $U$ and hence, by the latter, the triviality on any open torsion free pro-$p$ subgroup of $G$. Suppose that we have checked the multiplicativity of $\chi_G$ already and let $U_0$ be any pro-$p$ subgroup of $G$. Note that, as a $p$-adic Lie group, $G$ always has an open torsion free pro-$p$ subgroup. Hence $\chi_G | U_0$ factorizes through a finite quotient which is a $p$-group. Since any finite subgroup of $k^\times$ has order prime to $p$ it follows that $\chi_G$ is trivial on $U_0$. To establish multiplicativity let $g, h \in G$. Since conjugation commutes with corestriction we have the following three commutative diagrams, which together show our claim:
\begin{equation*}
  \xymatrix{
    H^d(U \cap gUg^{-1}, \hat{I})   & H^d(U \cap gUg^{-1} \cap ghU(gh)^{-1}, \hat{I}) \ar[l]_-{\cores}^-{\cong} \\
    H^d(U \cap g^{-1}Ug, \hat{I}) \ar[u]_{g_*}   & H^d(U \cap g^{-1}Ug \cap hUh^{-1}, \hat{I}) \ar[l]_-{\cores}^-{\cong} \ar[u]_{g_*} , }
\end{equation*}
\begin{equation*}
  \xymatrix{
    H^d(U \cap hUh^{-1}, \hat{I})   & H^d(U \cap g^{-1}Ug \cap hUh^{-1}, \hat{I}) \ar[l]_-{\cores}^-{\cong} \\
    H^d(U \cap h^{-1}Uh, \hat{I}) \ar[u]_{h_*}   & H^d(U \cap (gh)^{-1}Ugh \cap h^{-1}Uh, \hat{I}) \ar[l]_-{\cores}^-{\cong} \ar[u]_{h_*} , }
\end{equation*}
and
\begin{equation*}
  \xymatrix{
   H^d(U \cap gUg^{-1} \cap ghU(gh)^{-1}, \hat{I}) \ar[r]^-{\cores}_-{\cong} & H^d(U \cap ghU(gh)^{-1}, \hat{I})    \\
   H^d(U \cap g^{-1}Ug \cap hUh^{-1}, \hat{I}) \ar[r]^-{\cores}_-{\cong} \ar[u]_{g_*} & H^d(g^{-1}Ug \cap hUh^{-1}, \hat{I}) \ar[u]_{g_*}   \\
   H^d(U \cap (gh)^{-1}Ugh \cap h^{-1}Uh, \hat{I}) \ar[u]_{h_*} \ar[r]^-{\cores}_-{\cong} & H^d(U \cap (gh)^{-1}Ugh, \hat{I}) \ar[u]_{h_*}  \ar@/_15ex/[uu]_{(gh)_*}   . }
\end{equation*}
\end{proof}

The map
\begin{align*}
  H_U & \longrightarrow H_U \\
  \tau & \longmapsto \chi_G \tau \ \text{(pointwise product of functions)}
\end{align*}
is an algebra homomorphism. Pulling back an $H_U$-module $M$ along this homomorphism defines the twisted $H_U$-module $M(\chi_G)$. Also note that we may use the anti-involution in E. to make the $k$-linear dual $M^\vee := \Hom_k(M,k)$ of a left $H_U$-module $M$ again into a left $H_U$-module.

Using A. and C. we then may rewrite the diagram in Lemma \ref{lemma:pairings} as the commutative diagram
\begin{equation*}
  \xymatrix{
    \Ext^{d-i}_{\Mod(U)}(V,\hat{I}) \ar[d]_{\tau_h \cdot}  & \times  & H^i(U,V) \ar[r] & k \ar[d]^{\chi_G(h) \cdot} \\
    \Ext^{d-i}_{\Mod(U)}(V,\hat{I})  & \times  & H^i(U,V) \ar[u]_{\tau_{h^{-1}} \cdot} \ar[r] & k  . }
\end{equation*}
This says that the duality isomorphism \eqref{f:duality-iso} in fact is an isomorphism of $H_U$-modules
\begin{equation}\label{f:H-duality-iso}
  \Ext^{d-i}_{\Mod(U)}(V,k) \xrightarrow{\:\cong\;} H^i(U,V)^\vee(\chi_G) \ .
\end{equation}

\begin{proposition}\label{prop:SSduality}
  For any compact open subgroup $U \subseteq G$ which is pro-$p$ and torsion free and any $V$ in $\Mod(G)$ we have an $H_U$-equivariant $E_2$-spectral sequence
\begin{equation*}
   H^i(U,S^j(V)) \Longrightarrow H^{d-i-j}(U,V)^\vee(\chi_G) \ .
\end{equation*}
\end{proposition}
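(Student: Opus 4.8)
The plan is to feed the $H_U$-equivariant duality isomorphism \eqref{f:H-duality-iso} into the spectral sequence of Proposition \ref{prop:SS}; the hypotheses that $U$ be pro-$p$ and torsion free are precisely those under which \eqref{f:H-duality-iso} is available. First I would take the special case of Proposition \ref{prop:SS} recorded there, namely the $E_2$-spectral sequence
$$
H^i(U, S^j(V)) \Longrightarrow \Ext^{i+j}_{\Mod(U)}(V,k) ,
$$
which by Remark \ref{rem:equiv} is a spectral sequence of $H_U$-modules: its $E_2$-terms carry the $H_U$-action obtained by applying the $H_U$-equivariant functor $H^i(U,-)$ to the $G$-representation $S^j(V)=\underline{\Ext}^j(V,k)$, its differentials are $H_U$-linear, and the limit filtration on $\Ext^{i+j}_{\Mod(U)}(V,k)$ is by $H_U$-submodules.

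It then remains only to rewrite the abutment. For $0 \le i \le d$ the isomorphism \eqref{f:H-duality-iso} is an isomorphism of $H_U$-modules $\Ext^{d-i}_{\Mod(U)}(V,k) \cong H^i(U,V)^\vee(\chi_G)$, natural in $V$; substituting $n := d-i$ it reads $\Ext^n_{\Mod(U)}(V,k) \cong H^{d-n}(U,V)^\vee(\chi_G)$, and since both sides vanish for $n \notin [0,d]$ (the left one because $U$ has cohomological dimension $d$, the right one because $H^{>d}(U,-)=0$) this holds for all $n \in \Bbb{Z}$. Putting $n = i+j$ identifies the abutment of the spectral sequence with $H^{d-i-j}(U,V)^\vee(\chi_G)$ as a filtered $H_U$-module, which is the claim.

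The substantive content used here --- which is an input rather than an obstacle, being already established --- is the $H_U$-equivariance of \eqref{f:H-duality-iso}; this is exactly what Lemmas \ref{lemma:pairings} and \ref{lemma:chiG}, combined with the explicit descriptions in A., C., and E., were set up to provide, and once granted the present proof is a formal concatenation. The single compatibility I would still check explicitly is that the $H_U$-action on the $E_2$-term $H^i(U,S^j(V))$ coming out of the composed-functor spectral sequence coincides with the $H_U$-action on $H^i(U,-)$ used in \eqref{f:H-duality-iso}, but this follows at once from the functoriality built into the construction of the $H_U$-structure recalled just before Remark \ref{rem:equiv}.
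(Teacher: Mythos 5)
Your proposal is correct and is essentially identical to the paper's proof, which likewise obtains the spectral sequence by combining the second spectral sequence of Proposition \ref{prop:SS} (together with its $H_U$-equivariance from Remark \ref{rem:equiv}) with the duality isomorphism \eqref{f:H-duality-iso}. The extra details you supply --- the re-indexing $n=i+j$, the vanishing outside $[0,d]$, and the compatibility of the two $H_U$-actions --- are all routine and consistent with what the paper leaves implicit.
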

\begin{proof}
The spectral sequence arises by combining the second spectral sequence in Prop.\ \ref{prop:SS} (observe Remark \ref{rem:equiv}) with the duality isomorphism \eqref{f:H-duality-iso}.
\end{proof}

\begin{remark}
   Suppose that $G = \mathbf{G}(\mathfrak{F})$ where $\mathfrak{F}/\mathbb{Q}_p$ is a finite extension and $\mathbf{G}$ is a connected reductive $\mathfrak{F}$-split group over $\mathfrak{F}$. Assuming that a pro-$p$ Iwahori subgroup $U$ of $G$ is torsion free it is shown in \cite{OS} Prop.\ 7.16 that $\chi_G = 1$. Under additional assumptions this was proved before in \cite{Koz}.
\end{remark}

In fact, we will show that $\chi_G$ coincides with the duality character introduced by Kohlhaase in \cite{Koh} after Def.\ 3.12 and which we temporarily denote by $\chi_G^\mathrm{Koh}$.

\begin{proposition}\label{prop:Kohlhaase}
   We have $\chi_G = \chi_G^\mathrm{Koh}$.
\end{proposition}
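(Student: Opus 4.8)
The plan is to unwind Kohlhaase's definition of $\chi_G^{\mathrm{Koh}}$ and to match it, piece by piece, with the construction of $\chi_G$ given above. Recall that $\chi_G$ was manufactured out of the system of one--dimensional $k$--spaces $H^d(K,\hat I)=H^d(K,k)$, as $K$ runs through the open torsion free pro-$p$ subgroups of the fixed $U$: the corestriction maps between these lines are isomorphisms, they are therefore all identified with $k$, and $\chi_G(h)$ records by how much the conjugation isomorphism $h_\ast\colon H^d(U\cap h^{-1}Uh,\hat I)\xrightarrow{\cong}H^d(U\cap hUh^{-1},\hat I)$ fails to be the identity under those identifications. By \eqref{f:K-Kohlhaase} applied to the trivial representation we have $H^d(K,k)=\Ext^d_{\Mod(K)}(k,k)\cong\Ext^d_{\Omega(K)}(k,k)$, so $\chi_G$ is equally the conjugation-versus-corestriction discrepancy character read off of the system $\{\Ext^d_{\Omega(K)}(k,k)\}_K$ of top self-Ext groups over $\Omega(K)$. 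Kohlhaase's character, by contrast, is built in exactly the same manner but out of the system of \emph{dualizing lines} $E^d_K:=\Ext^d_{\Omega(K)}(k,\Omega(K))$ --- this is the object underlying his identification of the top functor $S^d$ and of its $G$-equivariant structure. Thus the Proposition reduces to comparing the two conjugation discrepancy characters attached to the systems $\{\Ext^d_{\Omega(K)}(k,k)\}$ and $\{\Ext^d_{\Omega(K)}(k,\Omega(K))\}$.

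First I would produce a canonical isomorphism between these two systems of lines. By Remark \ref{rem:global-dim} and the fact that $\Omega(K)$ is local (here $K$ is pro-$p$ and $\chara(k)=p$), the trivial module $k$ admits a finite resolution $P_\bullet\to k$ by finitely generated free $\Omega(K)$-modules, with $P_i=0$ for $i>d$. Dualizing and using right exactness of $-\otimes_{\Omega(K)}N$ one computes $\Ext^d_{\Omega(K)}(k,N)=H^d(\Hom_{\Omega(K)}(P_\bullet,\Omega(K))\otimes_{\Omega(K)}N)=\bigl(\coker(P^\vee_{d-1}\to P^\vee_d)\bigr)\otimes_{\Omega(K)}N= E^d_K\otimes_{\Omega(K)}N$, canonically in $N$. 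Applying this to $N=k$ gives $\Ext^d_{\Omega(K)}(k,k)\cong E^d_K\otimes_{\Omega(K)}k$. Now $E^d_K$ is one-dimensional over $k$ and, since $\Omega(K)$ is local with residue field $k$, the residual $\Omega(K)$-action on it is through the augmentation; hence $E^d_K\otimes_{\Omega(K)}k=E^d_K$, and the composite
\begin{equation*}
  E^d_K=E^d_K\otimes_{\Omega(K)}\Omega(K)\longrightarrow E^d_K\otimes_{\Omega(K)}k=\Ext^d_{\Omega(K)}(k,k)
\end{equation*}
is an isomorphism $E^d_K\xrightarrow{\cong}H^d(K,k)$.

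The decisive point is then that this isomorphism is natural in $K$: it commutes with the corestriction isomorphisms used to rigidify both systems, and with the conjugation isomorphisms $h_\ast$, because on either side all of these maps are ultimately induced by the inclusions $\Omega(K')\subseteq\Omega(K)$ and the conjugation isomorphisms $\Omega(K)\xrightarrow{\cong}\Omega(hKh^{-1})$ of Iwasawa algebras together with the fixed $G$-structures. Transporting the normalizations of one system to the other through this canonical isomorphism therefore turns Kohlhaase's conjugation scalars into ours, i.e.\ $\chi_G^{\mathrm{Koh}}=\chi_G$. (Equivalently, one can phrase the comparison on the level of duality isomorphisms: Kohlhaase's character is characterized by a Poincar\'e duality statement of the shape $H^{d-i}(U,V)\cong H^{i}(U,V)^\vee(\chi_G^{\mathrm{Koh}})$, natural in $V\in\Mod(G)$ and equivariant for change of $U$ by conjugation; juxtaposing this with \eqref{f:H-duality-iso}, whose change-of-$U$ equivariance is precisely Lemma \ref{lemma:pairings}, and evaluating in a bidegree where both sides are one-dimensional forces the two twisting characters to coincide.)

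The main obstacle will be the bookkeeping of conventions rather than any substantive new input. One must match Kohlhaase's conventions exactly --- whether his dualizing module is taken over $\Omega(K)$ or over $\Omega(K)^{op}$, the role of the inversion anti-involution $g\mapsto g^{-1}$ used in the equivalence $\Mod(K)^{op}\simeq\Mod_{pc}(\Omega(K))$ (and hence a possible inversion of the character), and the precise normalizing isomorphisms that he uses to rigidify the system $\{E^d_K\}$ --- and then check that they propagate through the canonical comparison isomorphism above without introducing any extra twist. Once these normalizations are aligned, the equality $\chi_G=\chi_G^{\mathrm{Koh}}$ is immediate from the naturality established in the previous step.
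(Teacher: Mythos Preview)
Your overall strategy---produce a canonical isomorphism between the system $\{E^d_K=\Ext^d_{\Omega(K)}(k,\Omega(K))\}_K$ underlying Kohlhaase's character and the system $\{H^d(K,k)\}_K$ underlying $\chi_G$, and then check it intertwines conjugation and the rigidifying transition maps---is exactly the paper's strategy. Your route to the isomorphism $E^d_K\cong H^d(K,k)$ (via $E^d_K\otimes_{\Omega(K)}k\cong\Ext^d_{\Omega(K)}(k,k)$) is more direct than the paper's, which instead passes through Pontryagin duality $P_{G_0}\colon E^d_K\cong\Ext^d_{\Mod(K)}(C^\infty(K,k),k)$ and then applies the Yoneda pairing with the constant function $\con_K\colon k\hookrightarrow C^\infty(K,k)$ to land in $H^d(K,k)$.

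The genuine gap is your claim that the isomorphism ``commutes with the corestriction isomorphisms used to rigidify both systems \ldots\ because on either side all of these maps are ultimately induced by the inclusions $\Omega(K')\subseteq\Omega(K)$''. This is not just bookkeeping. Kohlhaase's transition map $\ell_{G_0,G_1}\colon E^d_{G_0}\to E^d_{G_1}$ is \emph{not} simply induced by $\Omega(G_1)\subseteq\Omega(G_0)$: by his Prop.\ 3.2 it is the composite of restriction with the map $\Ext^d(k,j_{G_1,G_0}^\vee)$ induced by the Pontryagin dual of extension by zero $C^\infty(G_1,k)\hookrightarrow C^\infty(G_0,k)$. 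On the other side, corestriction $H^d(G_1,k)\to H^d(G_0,k)$ is defined via averaging in $\Mod(G_0)$. Showing that your isomorphism (or any isomorphism) matches $\ell_{G_0,G_1}$ with the inverse of corestriction is precisely the substantive content of the proposition, and it does \emph{not} follow from functoriality in $\Omega(K)$ alone. The paper handles this by the commutative diagram of Yoneda pairings (the top rectangle of Lemma~\ref{lemma:pairings} together with functoriality in $j_{G_1,G_0}$), tracking $\con_{G_1}\mapsto\con_{G_0}$ through the middle column; this is where the real work is, and your proposal has no substitute for it. Your parenthetical alternative (juxtaposing two Poincar\'e duality isomorphisms and evaluating in a one-dimensional bidegree) has the same defect: without first knowing that the two duality isomorphisms agree up to a scalar independent of $h$, you cannot conclude that the twisting characters coincide.
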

\begin{proof}
The character $\chi_G^\mathrm{Koh}$ describes the $G$-action on a certain one dimensional $k$-vector space $E^d(k)$ the original definition of which we do not need. Instead we use \cite{Koh} Prop.\ 3.2 which says that, for any compact open subgroup $G_0 \subseteq G$, there is a natural $G_0$-equivariant isomorphism $\ell_{G,G_0} : E^d(k) \xrightarrow{\cong} \Ext^d_{\Mod(\Omega(G_0))}(k, \Omega(G_0))$ such that:
\begin{itemize}
  \item[1)] For any $g \in G$ the diagram
\begin{equation*}
  \xymatrix{
    E^d(k) \ar[d]_{\chi_G^\mathrm{Koh}(g) \cdot} \ar[rr]^-{\ell_{G,G_0}} && \Ext^d_{\Mod(\Omega(G_0))}(k, \Omega(G_0)) \ar[d]^{g_*} \\
    E^d(k) \ar[rr]^-{\ell_{G,g G_0 g^{-1}}} && \Ext^d_{\Mod(\Omega(g G_0 g^{-1}))}(k, \Omega(gG_0 g^{-1}))   }
\end{equation*}
is commutative, where $g_*$ is the conjugation isomorphism (compare the argument in the last paragraph of the proof of \cite{Koh} Prop.\ 3.13).
  \item[2)] For any open subgroup $G_1 \subseteq G_0$ the diagram
\begin{equation*}
  \xymatrix@R=0.5cm{
                &         \Ext^d_{\Mod(\Omega(G_0))}(k, \Omega(G_0)) \ar[dd]^{\ell_{G_0,G_1}}     \\
  E^d(k) \ar[ur]^-{\ell_{G,G_0}} \ar[dr]_-{\ell_{G,G_1}}                 \\
                &         \Ext^d_{\Mod(\Omega(G_1))}(k, \Omega(G_1))                 }
\end{equation*}
is commutative. Moreover $\ell_{G_0,G_1}$ is the composite of the restriction map
\begin{equation*}
  \Ext^d_{\Mod(\Omega(G_0))}(k, \Omega(G_0)) \xrightarrow{\res}  \Ext^d_{\Mod(\Omega(G_1))}(k, \Omega(G_0))
\end{equation*}
and the map
\begin{equation*}
  \Ext^d(k, j_{G_1.G_0}^\vee) : \Ext^d_{\Mod(\Omega(G_1))}(k, \Omega(G_0)) \rightarrow  \Ext^d_{\Mod(\Omega(G_1))}(k, \Omega(G_1))
\end{equation*}
which is induced by the Pontrjagin dual $j_{G_1,G_0}^\vee$ of the extension by zero map $j_{G_1,G_0} : C^\infty(G_1,k) \rightarrow C^\infty(G_0,k)$.
\end{itemize}
The Pontrjagin dual of $C^\infty(G_0,k)$ being $\Omega(G_0)$ we have, using \eqref{f:K-Kohlhaase}, the isomorphism
\begin{equation*}
   P_{G_0} : \Ext^d_{\Mod(\Omega(G_0))}(k,\Omega(G_0)) \xrightarrow{\cong} \Ext^d_{\Mod(G_0)}(C^\infty(G_0,k),k) \ .
\end{equation*}
Combining it with the above two diagrams we arrive at the commutative diagrams
\begin{equation}\label{diag:K11}
  \xymatrix{
    E^d(k) \ar[d]_{\chi_G^\mathrm{Koh}(g) \cdot} \ar[rrr]^-{P_{G_0} \circ \ell_{G,G_0}}_-{\cong} &&& \Ext^d_{\Mod(G_0)}(C^\infty(G_0,k),k) \ar[d]^{g_*} \\
    E^d(k) \ar[rrr]^-{P_{g G_0 g^{-1}}  \circ \ell_{G,g G_0 g^{-1}}}_-{\cong}  &&& \Ext^d_{\Mod(g G_0 g^{-1})}(C^\infty(g G_0 g^{-1},k),k)   }
\end{equation}
and
\begin{equation}\label{diag:K21}
  \xymatrix@R=0.5cm{
                &         \Ext^d_{\Mod(G_0)}(C^\infty(G_0,k),k) \ar[dr]^{\res}     \\
  E^d(k) \ar[ur]^-{P_{G_0} \circ \ell_{G,G_0}\qquad }_{\cong} \ar[dr]_-{P_{G_1} \circ \ell_{G,G_1}\qquad}^{\cong}  & &  \Ext^d_{\Mod(G_1)}(C^\infty(G_0,k),k)  \ar[dl]^{\qquad \Ext^d(j_{G_1,G_0},k)}            \\
                &         \Ext^d_{\Mod(G_1)}(C^\infty(G_1,k),k) .                }
\end{equation}
Specializing to $G_0 = U$ again we note that the duality isomorphism \eqref{f:duality-iso} for $V = C^\infty(U,k)$ and $i=0$ is given by
\begin{align*}
  \Ext^d_{\Mod(U)}(C^\infty(U,k),k) & \xrightarrow{\;\cong\;} \Hom_k(\Hom_{\Mod(U)}(k,C^\infty(U,k)), H^d(U,k))  \\
  e & \longmapsto \big[\phi \mapsto \phi^*(e)\big] \ .
\end{align*}
Let $\con_U : k \rightarrow C^\infty(U,k)$ denote the map which sends $1 \in k$ to the constant function with value $1$ on $U$. Then the above isomorphism is equivalent to the
isomorphism
\begin{align*}
  \Ext^d_{\Mod(U)}(C^\infty(U,k),k) & \xrightarrow{\;\cong\;} H^d(U,k)  \\
  e & \longmapsto \con_U^*(e) \ .
\end{align*}
The first isomorphism being natural in conjugation by $g \in G$ and this conjugation sending $\con_U$ to $\con_{gUg^{-1}}$ we see that we have the commutative diagram
\begin{align}\label{diag:K31}
\xymatrix{
   \Ext^d_{\Mod(U)}(C^\infty(U,k),k) \ar[d]_{g_*} \ar[rr]^-{\con_U^*} && H^d(U,k) \ar[d]^{g_*} \\
   \Ext^d_{\Mod(gUg^{-1})}(C^\infty(gUg^{-1},k),k) \ar[rr]^-{\con_{gUg^{-1}}^*} && H^d(gUg^{-1},k).   }
\end{align}
Furthermore, if $U' \subseteq U$ is any open subgroup, then we have the commutative diagram of duality pairings
\begin{equation*}
  \xymatrix{
    \Ext^d_{\Mod(U)}(C^\infty(U,k),k) \ar[d]_{\res}  & \times  & H^0(U,C^\infty(U,k)) \ar[r] & H^d(U,k)  \\
    \Ext^d_{\Mod(U')}(C^\infty(U,k),k) \ar[d]_{\Ext^d(j_{U',U},k)}  & \times  & H^0(U',C^\infty(U,k)) \ar[u]_{\cores} \ar[r] & H^d(U',k) \ar[u]_{\cores} \ar@{=}[d] \\
    \Ext^d_{\Mod(U')}(C^\infty(U',k),k)  & \times  & H^0(U',C^\infty(U',k)) \ar[u]_{H^0(U',j_{U',U})} \ar[r] & H^d(U',k) .  \\
  }
\end{equation*}
Here the top. resp.\ bottom, rectangle is commutative by the top rectangle in Lemma \ref{lemma:pairings}, resp.\ the functoriality of the Yoneda pairing. Note that the middle column maps $\con_{U'}$ to $\con_U$. Hence we obtain the commutative diagram
\begin{align*}
\xymatrix{
   \Ext^d_{\Mod(U)}(C^\infty(U,k),k) \ar[d]_{\Ext^d(j_{U',U},k) \circ \res} \ar[r]^-{\con_U^*} & H^d(U,k)  \\
   \Ext^d_{\Mod(U')}(C^\infty(U',k),k) \ar[r]^-{\con_{U'}^*} & H^d(U',k)  \ar[u]^{\cores}    .   }
\end{align*}
By combining it with the diagram \eqref{diag:K21} we deduce the commutative diagram
\begin{equation*}
  \xymatrix@R=0.5cm{
                &         H^d(U,k) \ar[dr]_{\cong}     \\
  E^d(k) \ar[ur]^{\con_U^* \circ P_{U} \circ \ell_{G,U}\qquad}_{\cong} \ar[dr]_{\con_{U'}^* \circ P_{U'} \circ \ell_{G,U'}\qquad}^{\cong} && k    \\
                &         H^d(U',k)  \ar[uu]^{\cores}  \ar[ur]^{\cong}      ,         }
\end{equation*}
where the right hand oblique arrows are our standard identifications. This means that the isomorphism $\con_U^* \circ P_{U} \circ \ell_{G,U} : E^d(k) \xrightarrow{\cong} k$ does not depend on the subgroup $U$. With this information we consider the commutative diagram
\begin{equation*}
  \xymatrix{
    E^d(k) \ar[d]_{\chi_G^\mathrm{Koh}(g) \cdot} \ar[rrrrr]^{\con_U^* \circ P_{U} \circ \ell_{G,U}}_{\cong} &&&&& H^d(U,k) \ar[d]_{g_*} \ar[r]^-{\cong} & k \ar[d]^{\chi_G(g) \cdot} \\
    E^d(k) \ar[rrrrr]^{\con_{gUg^{-1}}^* \circ P_{gUg^{-1}} \circ \ell_{G,gUg^{-1}}}_{\cong} &&&&& H^d(gUg^{-1},k) \ar[r]^-{\cong} & k   }
\end{equation*}
which arises by combining \eqref{diag:K11} and \eqref{diag:K31}. Since the horizontal arrows coincide we conclude that $\chi_G^\mathrm{Koh}(g) = \chi_G(g)$.
\end{proof}

\begin{lemma}
  Suppose that $\mathbf{G}$ is a connected reductive group over a finite extension $\mathfrak{F}$ of $\mathbb{Q}_p$; if $G$ is an open subgroup of $\mathbf{G}(\mathfrak{F})$ then $\chi_G = 1$.
\end{lemma}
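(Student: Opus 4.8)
The plan is to reduce to $G=\mathbf{G}(\mathfrak{F})$ and then to recognise $\chi_G$ as the mod-$p$ reduction of an algebraic character of $\mathbf{G}$ which is patently trivial. First I would dispose of the passage from an open subgroup $G$ to $\mathbf{G}(\mathfrak{F})$: picking the torsion free open pro-$p$ subgroup $U$ inside $G$, the scalar $\chi_G(h)$ for $h\in G$ is read off, by definition, from the conjugation isomorphism $h_*\colon H^d(U\cap h^{-1}Uh,\hat I)\xrightarrow{\cong}H^d(U\cap hUh^{-1},\hat I)$ and the corestriction normalisations, a datum which is unchanged if $h$ is viewed inside $\mathbf{G}(\mathfrak{F})$. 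Hence, using the independence of $U$ in Lemma \ref{lemma:chiG}, $\chi_G=\chi_{\mathbf{G}(\mathfrak{F})}|_G$, and it suffices to treat $G=\mathbf{G}(\mathfrak{F})$.

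Next I would pass, via Proposition \ref{prop:Kohlhaase}, from $\chi_G$ to $\chi_G^{\mathrm{Koh}}$, the character through which $\mathbf{G}(\mathfrak{F})$ acts on the line $E^d(k)$. Transporting this action along Kohlhaase's isomorphism $\ell_{G,G_0}\colon E^d(k)\xrightarrow{\cong}\Ext^d_{\Mod(\Omega(G_0))}(k,\Omega(G_0))$ for a torsion free open pro-$p$ subgroup $G_0$, and using Lazard's computation of the dualizing module of the Iwasawa algebra $\Omega(G_0)$, one identifies the restriction of $\chi_G^{\mathrm{Koh}}$ to $G_0$ with the reduction modulo the maximal ideal of $g\mapsto\det\bigl(\mathrm{Ad}(g)\mid\mathrm{Lie}\,\mathbf{G}\bigr)$. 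To extend this from $G_0$ to an arbitrary element of $\mathbf{G}(\mathfrak{F})$ I would feed in the conjugation and shrinking compatibilities of the $\ell_{G,G_0}$ (diagrams \eqref{diag:K11} and \eqref{diag:K21}), run over the conjugates $gG_0g^{-1}$, and use that $\mathrm{Ad}$ is globally defined as a morphism of algebraic $\mathfrak{F}$-groups $\mathbf{G}\to\mathrm{GL}(\mathrm{Lie}\,\mathbf{G})$: the upshot is that $\chi_G^{\mathrm{Koh}}$ is controlled by the algebraic character $\det\circ\mathrm{Ad}$ of $\mathbf{G}$.

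It then remains to observe that $\det\circ\mathrm{Ad}\colon\mathbf{G}\to\mathbb{G}_m$ is the trivial character: it vanishes on $\mathbf{Z}(\mathbf{G})$ since the centre acts trivially via $\mathrm{Ad}$, and on $\mathbf{G}_{\mathrm{der}}$ since a semisimple group carries no non-trivial characters; as $\mathbf{G}=\mathbf{Z}(\mathbf{G})^{\circ}\cdot\mathbf{G}_{\mathrm{der}}$ we get $\det\circ\mathrm{Ad}=1$ identically, hence $\chi_G=1$. I expect the middle step to be the real obstacle: carrying out the Lazard-type identification of $\chi_G$ with the mod-$p$ reduction of $\det\mathrm{Ad}$ in natural characteristic, and above all promoting it from the compact-open $U$ to all of $\mathbf{G}(\mathfrak{F})$ — for a general (non-reductive) $p$-adic Lie group one would have to track the $\mathfrak{F}$-adic valuation of $\det\mathrm{Ad}(g)$, but in the reductive case this difficulty evaporates precisely because $\det\mathrm{Ad}\equiv 1$. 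As a cheaper partial statement one can at least remark that $\chi_G$ is trivial on the subgroup $\mathbf{G}(\mathfrak{F})^{+}$ generated by the $\mathfrak{F}$-points of the unipotent radicals of parabolic $\mathfrak{F}$-subgroups — each such group being an increasing union of compact open pro-$p$ subgroups, so that Lemma \ref{lemma:chiG} applies — and on $\mathbf{Z}(\mathbf{G})(\mathfrak{F})$; but since $\mathbf{G}(\mathfrak{F})/\mathbf{G}(\mathfrak{F})^{+}$ may be neither a $p'$-group nor even finite (consider an anisotropic form such as $D^{\times}$), this by itself does not conclude, which is why the $\det\mathrm{Ad}$ input is genuinely required.
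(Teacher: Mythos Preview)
Your outline is essentially correct and reproduces, in expanded form, the content of Kohlhaase's own result (\cite{Koh} Cor.\ 5.2): he identifies $\chi_G^{\mathrm{Koh}}$ with the mod-$p$ reduction of $g\mapsto\det_{\mathbb{Q}_p}(\mathrm{Ad}(g))$ and then observes that this algebraic character is trivial on a connected reductive group. So the ``real obstacle'' you flag is precisely the computation already carried out in \cite{Koh}; you could simply cite it rather than redo it. One small caveat: the dualizing line in degree $d=\dim_{\mathbb{Q}_p}G$ is governed by the $\mathbb{Q}_p$-determinant of $\mathrm{Ad}$ on the $\mathbb{Q}_p$-Lie algebra, not the $\mathfrak{F}$-determinant; the two are related by $\det_{\mathbb{Q}_p}=N_{\mathfrak{F}/\mathbb{Q}_p}\circ\det_{\mathfrak{F}}$, so triviality of the algebraic character $\det_{\mathfrak{F}}\mathrm{Ad}$ still gives what you want.

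The paper's proof takes a different, shorter route. It invokes Prop.\ \ref{prop:Kohlhaase} and then quotes \cite{Koh} Cor.\ 5.2 as a black box to settle the case $\mathfrak{F}=\mathbb{Q}_p$; for general $\mathfrak{F}$ it applies Weil restriction (via \cite{Oes}) to replace $\mathbf{G}/\mathfrak{F}$ by a connected reductive group $\mathbf{G}'/\mathbb{Q}_p$ with $\mathbf{G}'(\mathbb{Q}_p)=\mathbf{G}(\mathfrak{F})$ as $p$-adic Lie groups, thereby reducing to the case already handled. Your approach is more self-contained and works uniformly over $\mathfrak{F}$ without the Weil-restriction detour, at the cost of reproving Kohlhaase's identification; the paper's approach is quicker but leans on two external references. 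Both are valid, and in fact your argument is morally what lies behind the cited corollary.
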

\begin{proof}
The above Prop.\ \ref{prop:Kohlhaase} together with \cite{Koh} Cor.\ 5.2 show the assertion in the case $\mathfrak{F} = \mathbb{Q}_p$. In general let $\mathbf{G}'$ denote the Weil restriction of $\mathbf{G}$ to $\mathbb{Q}_p$. It is shown in \cite{Oes} App.\ 3 that $\mathbf{G}'$ again is a connected linear algebraic group with the property that $\mathbf{G}(\mathfrak{F}) = \mathbf{G}'(\mathbf{Q}_p)$ as $p$-adic Lie groups. Since our field extension is separable it follows from loc.\ cit.\ A.3.4 that with $\mathbf{G}$ also $\mathbf{G}'$ is reductive. This reduces the general case to the case $\mathfrak{F} = \mathbb{Q}_p$.
\end{proof}

\section{Derived smooth duality}\label{sec:derived-duality}

We begin by recalling some general nonsense about the adjunction between tensor product and Hom-functor which for three $k$-vector spaces $V_1$, $V_2$, and $V_3$ is given by the linear isomorphism
\begin{align}\label{f:basic-adj}
  \Hom_k(V_1 \otimes_k V_2, V_3) & \xrightarrow{\;\cong\;} \Hom_k(V_1, \Hom_k(V_2,V_3)) \\
                  A & \longmapsto \lambda_A(v_1)(v_2) := A (v_1 \otimes v_2) \ .   \nonumber
\end{align}
Suppose that all three vector spaces carry a left $G$-action. Then $\Hom_k(V_1 \otimes_k V_2, V_3)$ and $\Hom_k(V_1, \Hom_k(V_2,V_3))$ are equipped with the $G \times G \times G$-action defined by
\begin{equation*}
  {^{(g_1,g_2,g_3)} A}(v_1 \otimes v_2) := g_3 A(g_1^{-1} v_1 \otimes g_2^{-1} v_2)  \quad\text{and}\quad   {^{(g_1,g_2,g_3)} \lambda}(v_1)(v_2) := g_3 (\lambda(g_1^{-1} v_1)(g_2^{-1} v_2))  ,
\end{equation*}
respectively. The above adjunction is equivariant for these two actions. If we restrict to the diagonal $G$-action, then the above adjunction induces the adjunction isomorphism
\begin{equation*}
  \Hom_{k[G]}(V_1 \otimes_k V_2, V_3) \xrightarrow{\;\cong\;} \Hom_{k[G]}(V_1, \Hom_k(V_2,V_3)) \ .
\end{equation*}
If the $G$-action on the $V_i$ is smooth then this also can be written as an isomorphism
\begin{equation}\label{f:smooth-adj}
  \Hom_{\Mod(G)}(V_1 \otimes_k V_2, V_3) \cong \Hom_{\Mod(G)}(V_1, \underline{\Hom}(V_2,V_3)) \ .
\end{equation}

Let $D(G)$ denote the unbounded derived category of $\Mod(G)$. The tensor product functor
\begin{align*}
  \Mod(G) \times \Mod(G) & \longrightarrow \Mod(G) \\
             (V_1,V_2) & \longmapsto V_1 \otimes_k V_2 \ ,
\end{align*}
where the $G$-action on the tensor product is the diagonal one, is exact in both variables. Therefore it extends directly (i.e., without derivation)
to the functor
\begin{align*}
  D(G) \times D(G) & \longrightarrow D(G) \\
             (V_1^\bullet,V_2^\bullet) & \longmapsto \mathrm{tot}_\oplus (V_1^\bullet \otimes_k V_2^\bullet) \ ,
\end{align*}
which we usually denote simply by $V_1^\bullet \otimes_k V_2^\bullet$.\footnote{This uses the fact that for any two complexes of vector spaces one of which is acyclic their tensor product is acyclic as well.} On the other hand, since $\Mod(G)$ is a Grothendieck category, we have for any $V_0$ in $\Mod(G)$ the total derived functor
\begin{equation*}
  R\underline{\Hom}(V_0,-) : D(G)  \longrightarrow D(G)
\end{equation*}
such that $R^j\underline{\Hom}(V_0,V) = \underline{\Ext}^j(V_0,V)$ for any $V$ in $\Mod(G)$ and $j \geq 0$. We want to extend this to a bifunctor $D(G)^{op} \times D(G) \rightarrow D(G)$. First we recall that $\Mod(G)$ has arbitrary direct products (but which are not exact); we will denote these by $\prod^\infty$ to avoid confusion with the cartesian direct product. Hence, for any two complexes $V_1^\bullet$ and $V_2^\bullet$ in $\Mod(G)$ we may define the complex
\begin{equation*}
  \underline{\Hom}^\bullet (V_1^\bullet, V_2^\bullet) := {\prod_{j \in \mathbb{Z}}}^\infty \underline{\Hom}(V_1^j,V_2^{j+ \bullet})
\end{equation*}
in $\Mod(G)$ in the usual way. By construction we have that
\begin{align}\label{f:limit-formula}
  \underline{\Hom}^\bullet (V_1^\bullet, V_2^\bullet) & = \varinjlim_K \big( \prod_{j \in \mathbb{Z}} \underline{\Hom}(V_1^j,V_2^{j+ \bullet}) \big)^K = \varinjlim_K \prod_{j \in \mathbb{Z}} \underline{\Hom}(V_1^j,V_2^{j+ \bullet})^K \nonumber   \\
    & = \varinjlim_K \prod_{j \in \mathbb{Z}} \Hom_{\Mod(K)}(V_1^j,V_2^{j+ \bullet})   \\
    & = \varinjlim_K \Hom_{\Mod(K)}^\bullet (V_1^\bullet, V_2^\bullet)  \nonumber
\end{align}
is the inductive limit over all compact open subgroups $K \subseteq G$ of the usual Hom-complexes for the abelian categories $\Mod(K)$.

The adjunction \ref{f:smooth-adj} shows that the assumptions of \cite{KS} Thm.\ 14.4.8 are satisfied (with $\mathcal{P}_i = \mathcal{C}_i = \Mod(G)$, $G$ the tensor product functor, and $F_1 = F_2 = \underline{\Hom}$). Hence we obtain the following result.

\begin{proposition}\label{prop:total-inner-Hom}
  The total derived functor $R\underline{\Hom}(-,-) : D(G)^{op} \times D(G)  \longrightarrow D(G)$ exists and can be computed by $R\underline{\Hom}(V_1^\bullet,V_2^\bullet) = \underline{\Hom}^\bullet(V_1^\bullet,J^\bullet)$ where $V_2^\bullet \xrightarrow{\simeq} J^\bullet$ is a homotopically injective resolution. Moreover, there are the natural adjunctions
\begin{equation*}
  \Hom_{D(G)}(V_1^\bullet \otimes_k V_2^\bullet,V_3^\bullet) = \Hom_{D(G)}(V_1^\bullet, R\underline{\Hom}(V_2^\bullet,V_3^\bullet))
\end{equation*}
and
\begin{equation*}
  R\Hom_{\Mod(G)}(V_1^\bullet \otimes_k V_2^\bullet,V_3^\bullet) = R\Hom_{\Mod(G)}(V_1^\bullet, R\underline{\Hom}(V_2^\bullet,V_3^\bullet))
\end{equation*}
for any $V_i^\bullet$ in $D(G)$.
\end{proposition}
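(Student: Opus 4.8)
The proof is a direct application of \cite{KS} Thm.\ 14.4.8. For the triple $(\Mod(G),\Mod(G),\Mod(G))$ with the (exact) tensor functor $\otimes_k$ and the inner hom $\underline{\Hom}$, the hypotheses of that theorem reduce to the pointwise adjunction \eqref{f:smooth-adj}, which is already available; so the plan is merely to recall the two facts on which the construction rests and then read off the displayed adjunctions. Throughout we use that $\Mod(G)$, being Grothendieck, admits homotopically injective resolutions $V^\bullet\xrightarrow{\simeq}J^\bullet$ of arbitrary complexes, and that \eqref{f:smooth-adj}, applied degreewise and organised through the $\prod^\infty$ in the definition of $\underline{\Hom}^\bullet$, upgrades to a natural isomorphism of complexes
\[
  \Hom^\bullet_{\Mod(G)}\bigl(V_1^\bullet\otimes_k V_2^\bullet,\,V_3^\bullet\bigr)\;\cong\;\Hom^\bullet_{\Mod(G)}\bigl(V_1^\bullet,\,\underline{\Hom}^\bullet(V_2^\bullet,V_3^\bullet)\bigr)
\]
with the usual signs.

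The first fact is that $\underline{\Hom}^\bullet(V_2^\bullet,-)$ sends homotopically injective complexes to homotopically injective complexes: if $J^\bullet$ is homotopically injective and $A^\bullet$ is acyclic, taking $V_1^\bullet=A^\bullet$ in the complex isomorphism above identifies $\Hom^\bullet_{\Mod(G)}(A^\bullet,\underline{\Hom}^\bullet(V_2^\bullet,J^\bullet))$ with $\Hom^\bullet_{\Mod(G)}(A^\bullet\otimes_k V_2^\bullet,J^\bullet)$, which is acyclic because $A^\bullet\otimes_k V_2^\bullet$ is acyclic (the footnote fact) and $J^\bullet$ is homotopically injective. The second fact is that $\underline{\Hom}^\bullet(-,J^\bullet)$ carries quasi-isomorphisms to quasi-isomorphisms for homotopically injective $J^\bullet$: by \eqref{f:limit-formula} it is the filtered colimit over compact open $K\subseteq G$ of the functors $\Hom^\bullet_{\Mod(K)}(-,J^\bullet)$, the restriction $\Mod(G)\to\Mod(K)$ has the exact left adjoint $\ind_K^G$ so that $J^\bullet$ stays homotopically injective over $\Mod(K)$, each $\Hom^\bullet_{\Mod(K)}(-,J^\bullet)$ therefore preserves quasi-isomorphisms, and filtered colimits of quasi-isomorphisms of complexes of $k$-vector spaces are quasi-isomorphisms. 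These two facts are exactly what makes the class of homotopically injective complexes adapted to the bifunctor, so \cite{KS} Thm.\ 14.4.8 produces $R\underline{\Hom}(-,-)\colon D(G)^{op}\times D(G)\to D(G)$ together with the formula $R\underline{\Hom}(V_1^\bullet,V_2^\bullet)=\underline{\Hom}^\bullet(V_1^\bullet,J^\bullet)$; that this extends the single-variable functor recalled above, so that $R^j\underline{\Hom}(V_0,V)=\underline{\Ext}^j(V_0,V)$ for modules, follows by using a bounded-below injective resolution.

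For the adjunctions, fix a homotopically injective resolution $V_3^\bullet\xrightarrow{\simeq}J^\bullet$; then $R\underline{\Hom}(V_2^\bullet,V_3^\bullet)=\underline{\Hom}^\bullet(V_2^\bullet,J^\bullet)$ is again homotopically injective by the first fact. Since morphisms in $D(G)$ into a homotopically injective object are computed in the homotopy category, the first adjunction is obtained by applying $H^0$ to the complex isomorphism above with $V_3^\bullet$ replaced by $J^\bullet$, and the second by the same step applied to the full complex $\Hom^\bullet_{\Mod(G)}(-,J^\bullet)$ rather than to its degree-zero cohomology (both $R\Hom$-objects being represented by $\Hom^\bullet_{\Mod(G)}(-,J^\bullet)$ of the two sides of that isomorphism). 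The point I expect to take the most care is matching these verifications to the precise statement of \cite{KS} Thm.\ 14.4.8 — in particular confirming that homotopically injective complexes are stable under $\underline{\Hom}^\bullet(V_2^\bullet,-)$, which hinges on the compatibility, visible in \eqref{f:limit-formula}, between the infinite product defining $\underline{\Hom}^\bullet$ and the colimit over compact open subgroups; granting that, the derived adjunctions are formal from \eqref{f:smooth-adj}.
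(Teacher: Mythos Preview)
Your proposal is correct and takes the same approach as the paper: both invoke \cite{KS} Thm.\ 14.4.8 with the adjunction \eqref{f:smooth-adj} as the sole input. The paper's proof is a one-line citation, while you have unpacked the verifications (homotopically injective complexes are preserved by $\underline{\Hom}^\bullet(V_2^\bullet,-)$ via the complex-level adjunction, and $\underline{\Hom}^\bullet(-,J^\bullet)$ preserves quasi-isomorphisms via \eqref{f:limit-formula}); this is added detail rather than a different route.
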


\begin{corollary}\label{sym-monoidal}
   $(D(G), \otimes_k, k, R\underline{\Hom})$ is a closed symmetric monoidal category.
\end{corollary}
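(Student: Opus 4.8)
The plan is to split the statement into two essentially independent tasks: equipping $D(G)$ with a symmetric monoidal structure $(\otimes_k, k)$, and exhibiting $R\underline{\Hom}$ as the corresponding internal hom. The second task is already done: Proposition \ref{prop:total-inner-Hom} provides the natural isomorphism $\Hom_{D(G)}(V_1^\bullet \otimes_k V_2^\bullet, V_3^\bullet) = \Hom_{D(G)}(V_1^\bullet, R\underline{\Hom}(V_2^\bullet, V_3^\bullet))$, which says precisely that $R\underline{\Hom}(V_2^\bullet, -)$ is right adjoint to $- \otimes_k V_2^\bullet$; since $R\underline{\Hom}$ is constructed there as an honest bifunctor on $D(G)^{op} \times D(G)$, this is the required parametrized adjunction, and once the symmetry isomorphism is in hand the same adjunction turns tensoring on the other side into a hom as well, so $D(G)$ is closed. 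Thus the real content lies in the monoidal structure and its coherence.

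I would build that structure level by level. On $\Mod(G)$ the tensor product $V_1 \otimes_k V_2$ with the diagonal action is, after forgetting the $G$-action, the ordinary tensor product of $k$-vector spaces, and I would take the associativity isomorphism $(V_1 \otimes_k V_2) \otimes_k V_3 \cong V_1 \otimes_k (V_2 \otimes_k V_3)$, the unit isomorphisms with respect to $k$ placed in degree $0$, and the symmetry $V_1 \otimes_k V_2 \cong V_2 \otimes_k V_1$ to be the standard ones for vector spaces. Each is visibly equivariant for the diagonal actions and preserves smoothness, hence is a morphism in $\Mod(G)$, and the pentagon, triangle and hexagon identities hold in $\Mod(G)$ because they already hold on underlying $k$-vector spaces. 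So $(\Mod(G), \otimes_k, k)$ is symmetric monoidal, and it remains to transport this to $D(G)$.

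For the passage to complexes I would form $\mathrm{tot}_\oplus(V_1^\bullet \otimes_k V_2^\bullet)$ as in the excerpt, define the associativity and unit constraints termwise from those on $\Mod(G)$, and define the symmetry by the Koszul-signed flip $v_1 \otimes v_2 \mapsto (-1)^{(\deg v_1)(\deg v_2)} v_2 \otimes v_1$; these are morphisms of complexes satisfying the coherence diagrams degreewise once the signs are tracked, so the category of complexes and its homotopy category become symmetric monoidal. Since $\otimes_k$ is exact in each variable — the tensor product of a complex of $k$-vector spaces with an acyclic one is acyclic, as recorded in the footnote in the excerpt — the functor descends to $D(G) \times D(G) \to D(G)$ without any derivation, and the structural isomorphisms, being degreewise isomorphisms of complexes, descend along with it and keep satisfying the coherence axioms. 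Combined with the adjunction of Proposition \ref{prop:total-inner-Hom}, this yields the corollary.

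The only genuinely delicate point will be the sign bookkeeping for the symmetry on complexes and the verification that the hexagon axiom survives the Koszul signs; this is classical and formally identical to the case of complexes over a field, so it requires no input beyond what is already at hand. If one wishes, one can additionally record that $R\underline{\Hom}$ is compatible with the associativity and symmetry constraints — for instance that the two adjunctions of Proposition \ref{prop:total-inner-Hom} fit together coherently — and this again reduces, via the explicit formula $R\underline{\Hom}(V_1^\bullet, V_2^\bullet) = \underline{\Hom}^\bullet(V_1^\bullet, J^\bullet)$, the limit description \eqref{f:limit-formula}, and the underived adjunction \eqref{f:smooth-adj}, to the corresponding compatibilities for $k$-vector spaces.
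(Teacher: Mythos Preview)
Your proposal is correct and matches the paper's intent: the corollary is stated there without proof, as an immediate consequence of the adjunction in Proposition~\ref{prop:total-inner-Hom} together with the standard symmetric monoidal structure on complexes over a field, and you have simply spelled out these routine verifications.
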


For $V_2 = k$ viewed as complex concentrated in degree zero we, in particular, obtain the total derived duality functor
\begin{equation*}
  R\underline{\Hom}(-,k) : D(G)^{op} \longrightarrow D(G)
\end{equation*}
such that $R^j\underline{\Hom}(V,k) = S^j(V)$ for any $V$ in $\Mod(G)$ and any $j \geq 0$. In order to see in which way $k$ is a dualizing object for $\Mod(G)$ we have to introduce two finiteness conditions. First we observe that by Remark \ref{rem:global-dim} and \eqref{f:limit-formula} we may use \cite{Har} Prop.\ I.7.6 to conclude that the functor
\begin{equation*}
  R\underline{\Hom}(-,k) : D^b(G)^{op} \longrightarrow D^b(G)
\end{equation*}
(is way-out in both directions and) respects the bounded subcategories.

Next we recall that a representation $V$ in $\Mod(G)$ is called admissible if, for any open subgroup $K \subseteq G$, the vector space of $K$-fixed vectors $V^K$ is finite dimensional. In fact, it suffices to check the defining condition for a single compact open subgroup $K$ (apply the Nakayama lemma to the dual $\Omega(K)$-module $V^\vee$ or see \cite{Koh} Lemma 1.7). The full subcategory $\Mod_{adm}(G)$ of admissible representations in $\Mod(G)$ is  a Serre subcategory (cf.\ \cite{Em1} Prop.\ 2.2.13). Hence we have the strictly full triangulated subcategories $D^b_{adm}(G) \subseteq D^b(G)$ and $D_{adm}(G) \subseteq D(G)$ of those complexes whose cohomology representations are admissible.

\begin{lemma}\label{lemma:admissible}
  The derived duality functor $R\underline{\Hom}(-,k)$ respects both subcategories $D^b_{adm}(G)$ and $D_{adm}(G)$.
\end{lemma}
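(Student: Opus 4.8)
The plan is to reduce the statement about $D_{adm}(G)$ to a statement about single cohomology representations, and then to invoke Kohlhaase's finiteness results for the functors $S^j$ together with the spectral sequence of Proposition \ref{prop:SS}. First I would treat the bounded case $D^b_{adm}(G)$. Given $V^\bullet$ in $D^b_{adm}(G)$, the hypercohomology spectral sequence (the second one attached to $R\underline{\Hom}(-,k)$, i.e.\ $\underline{\Ext}^i(h^j(V^\bullet),k) \Rightarrow R^{i+j}\underline{\Hom}(V^\bullet,k) = h^{i+j}(R\underline{\Hom}(V^\bullet,k))$) has only finitely many nonzero rows, since $V^\bullet$ is bounded, and by Remark \ref{rem:global-dim} only finitely many nonzero columns ($0 \le i \le d$). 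Hence it converges, and it suffices to show each $E_2$-term $\underline{\Ext}^i(h^j(V^\bullet),k) = S^i(h^j(V^\bullet))$ is admissible. This is the core point: \emph{if $V$ is admissible then so is every $S^i(V)$}. For this I would use the isomorphism \eqref{f:K-Kohlhaase}, which identifies $S^i(V) = \varinjlim_K \Ext^i_{\Mod(\Omega(K))}(k, V^\vee)$; since $V$ is admissible, $V^\vee$ is a finitely generated $\Omega(K)$-module for $K$ small enough, and $\Omega(K)$ is noetherian, so $\Ext^i_{\Mod(\Omega(K))}(k, V^\vee)$ is again finitely generated over $\Omega(K)$, hence its Pontrjagin dual is an admissible smooth representation; one then checks the transition maps in the limit are compatible with this, which is exactly Kohlhaase's statement that $S^i$ preserves admissibility (\cite{Koh}, the admissibility part of the discussion following Def.\ 3.12, or Lemma 1.7 loc.\ cit.\ for the single-$K$ criterion). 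Since $\Mod_{adm}(G)$ is a Serre subcategory, the abutment $h^n(R\underline{\Hom}(V^\bullet,k))$, being a subquotient of a finite direct sum of admissible $E_2$-terms, is admissible; and $R\underline{\Hom}(V^\bullet,k)$ is bounded by the remark just before the lemma. This handles $D^b_{adm}(G)$.

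For the unbounded case $D_{adm}(G)$ I would argue that boundedness of the complex is not actually needed, only the vanishing $\underline{\Ext}^i = 0$ for $i > d$. Fix $n \in \mathbb{Z}$; I claim $h^n(R\underline{\Hom}(V^\bullet,k))$ depends only on finitely many $h^j(V^\bullet)$. Concretely, choose a homotopically injective resolution $V^\bullet \xrightarrow{\simeq} J^\bullet$ and use the two standard truncations: $\tau_{\le m} V^\bullet \to V^\bullet \to \tau_{> m} V^\bullet$. Applying $R\underline{\Hom}(-,k)$ and using that it has cohomological amplitude $[0,d]$ on objects of $\Mod(G)$ (Remark \ref{rem:global-dim}) together with a spectral-sequence estimate, one sees that $h^n(R\underline{\Hom}(\tau_{> m}V^\bullet, k)) = 0$ once $m$ is large enough relative to $n$ (all contributing $E_2$-terms $\underline{\Ext}^i(h^j, k)$ with $i+j = n$ force $j = n - i \ge n - d$, so truncating below $n-d$ changes nothing), and symmetrically $h^n(R\underline{\Hom}(\tau_{\le m}V^\bullet,k)) = 0$ for $m$ very negative. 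Hence $h^n(R\underline{\Hom}(V^\bullet,k)) \cong h^n(R\underline{\Hom}(\tau_{[a,b]}V^\bullet,k))$ for a suitable finite window $[a,b]$ around $n-d,\dots,n$; the middle complex is bounded with admissible cohomology, so the bounded case applies and gives admissibility of $h^n$. Since $n$ was arbitrary, $R\underline{\Hom}(V^\bullet,k) \in D_{adm}(G)$.

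The main obstacle I anticipate is the unbounded case — specifically, making rigorous the claim that $h^n$ of the derived dual only sees a bounded truncation of $V^\bullet$. The clean way is via the two hyperext spectral sequences: for the bounded-below truncation one uses the first spectral sequence $E_2^{i,j} = \underline{\Ext}^i(h^{-j}(V^\bullet),k)$ (which converges because, column by column, only $0 \le i \le d$ survives, so for fixed total degree $n$ only finitely many $(i,j)$ contribute, all with $j$ in a range determined by $n$ and $d$), and one checks directly that the filtration on $h^n$ of the abutment is finite and unchanged under the relevant truncations. One must be slightly careful that the spectral sequence for an \emph{unbounded} complex converges in the appropriate sense; but the uniform bound $\underline{\Ext}^i = 0$ for $i \notin [0,d]$ makes it a spectral sequence with entries in a bounded band of columns, which converges strongly. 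Everything else — preservation of admissibility by each $S^i$, the Serre property, boundedness of the dual in the bounded case — is either quoted from the excerpt or from \cite{Koh}.
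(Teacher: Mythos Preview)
Your proof is correct and follows essentially the same strategy as the paper. Both arguments rest on the single key input that $S^j$ preserves admissibility (Kohlhaase, \cite{Koh} Cor.~3.15), and then extend from single objects to complexes using the finite cohomological amplitude of $R\underline{\Hom}(-,k)$ (Remark~\ref{rem:global-dim}). The only difference is packaging: the paper invokes Hartshorne's way-out functor lemma (\cite{Har} Prop.~I.7.3) as a black box, whereas you unpack exactly that lemma by hand via the hypercohomology spectral sequence in the bounded case and truncation triangles in the unbounded case. Your convergence worry in the unbounded situation is precisely what the way-out hypothesis handles, and your resolution (the $E_2$-terms lie in a band of width $d+1$, so for each fixed total degree only finitely many contribute) is the standard one. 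Two cosmetic points: the indexing in your first spectral sequence should read $\underline{\Ext}^i(h^{-j}(V^\bullet),k)$ rather than $h^j$ (you silently fix this later), and the precise reference in \cite{Koh} for admissibility of $S^j(V)$ is Cor.~3.15.
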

\begin{proof}
It is shown in \cite{Koh} Cor.\ 3.15 that for an admissible representation $V$ in $\Mod(G)$ the representations $S^j(V)$ are admissible as well. Hence for an admissible $V$ the complex $R\underline{\Hom}(V,k)$ lies in $D^b_{adm}(G)$. On the other hand we have observed already that our functor is way-out in both directions in the sense of \cite{Har} \S7. Therefore our assertion follows from loc.\ cit.\ Prop.\ I.7.3.
\end{proof}

Let $V^\bullet$ be any complex in $\Mod(G)$ and fix an injective resolution $k \xrightarrow{\simeq} \mathcal{J}^\bullet$. We construct a natural transformation
\begin{equation}\label{f:biduality-morphism}
  \eta_{V^\bullet} : V^\bullet \longrightarrow \underline{\Hom}^\bullet(\underline{\Hom}^\bullet(V^\bullet,\mathcal{J}^\bullet),\mathcal{J}^\bullet)
\end{equation}
as follows. Inserting the definitions we have to produce, for any $\ell \in \mathbb{Z}$, a natural $G$-equivariant map
\begin{equation*}
  \eta_{V^\ell} : V^\ell \longrightarrow {\prod_{j \in \mathbb{Z}}}^\infty \underline{\Hom}({\prod_{i \in \mathbb{Z}}}^\infty \underline{\Hom}(V^i,\mathcal{J}^{i+j}),\mathcal{J}^{j+\ell})
\end{equation*}
compatible with the differentials. It is straightforward to check that the maps $\eta_{V^\ell}(v)((f_{i,j})_i) := (-1)^{\ell j}f_{\ell,j}(v)$ have these properties.

\begin{proposition}\label{prop:involutive}
   If the complex $V^\bullet$ has admissible cohomology then the natural transformation $\eta_{V^\bullet}$ is a quasi-isomorphism.
\end{proposition}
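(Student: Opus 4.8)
The plan is to reduce the statement to the case where $V^\bullet$ is a single admissible representation $V$ placed in degree zero, and then to invoke the involutivity of higher smooth duality on admissible representations.

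First I would make precise what $\eta_{V^\bullet}$ computes on cohomology. Writing $D^\bullet := \underline{\Hom}^\bullet(V^\bullet,\mathcal{J}^\bullet) = R\underline{\Hom}(V^\bullet,k)$, which is legitimate since $\mathcal{J}^\bullet$ is a (homotopically) injective resolution of $k$, the target of $\eta_{V^\bullet}$ is $\underline{\Hom}^\bullet(D^\bullet,\mathcal{J}^\bullet)$. This is \emph{not} automatically $R\underline{\Hom}(D^\bullet,k)$ because $D^\bullet$ need not be homotopically injective in the first variable; but $\underline{\Hom}^\bullet(-,\mathcal{J}^\bullet)$ does send quasi-isomorphisms in the first variable to quasi-isomorphisms (as $\mathcal{J}^\bullet$ is homotopically injective, hence represents an injective object of the derived category), so $\underline{\Hom}^\bullet(D^\bullet,\mathcal{J}^\bullet)$ does represent $R\underline{\Hom}(R\underline{\Hom}(V^\bullet,k),k)$ and $\eta_{V^\bullet}$ is a model for the biduality morphism $V^\bullet \to R\underline{\Hom}(R\underline{\Hom}(V^\bullet,k),k)$. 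So the question is genuinely whether biduality is an isomorphism in $D(G)$.

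The main reduction is a truncation/dévissage argument. Since $R\underline{\Hom}(-,k)$ has cohomological amplitude contained in $[0,d]$ by Remark \ref{rem:global-dim} (i.e.\ is way-out in both directions in the sense of \cite{Har} \S7), and since $\eta$ is a natural transformation between two such composite functors, a standard way-out argument (\cite{Har} Prop.\ I.7.1, or the five-lemma applied to the distinguished triangles coming from the stupid/canonical truncations together with a colimit argument to pass from $D^+$ and $D^-$ to all of $D$) shows that it suffices to prove $\eta_V$ is a quasi-isomorphism for $V$ a single admissible representation concentrated in degree zero. For such a $V$, both $R\underline{\Hom}(V,k)$ and $R\underline{\Hom}(R\underline{\Hom}(V,k),k)$ are bounded complexes with admissible cohomology by Lemma \ref{lemma:admissible} (using \cite{Koh} Cor.\ 3.15 that $S^j$ preserves admissibility), so we are inside $D^b_{adm}(G)$ where the hypercohomology spectral sequences behave well.

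It then remains to check that $\eta_V$ induces an isomorphism $h^n(V) \xrightarrow{\cong} h^n(R\underline{\Hom}(R\underline{\Hom}(V,k),k)) = S^n(S^\bullet(V))$-type term; concretely one runs the two hyperExt spectral sequences $E_2^{p,q} = S^p(S^q(V))$ for the composite $R\underline{\Hom}(-,k)\circ R\underline{\Hom}(-,k)$ and compares with the known result. This is exactly the biduality statement for Kohlhaase's higher smooth duality on admissible representations: Kohlhaase proves (\cite{Koh}, around Thm.\ 3.17 / the spectral sequence relating $S^i S^j$) that for admissible $V$ the natural biduality map $V \to R\underline{\Hom}(R\underline{\Hom}(V,k),k)$ is a quasi-isomorphism — equivalently the spectral sequence $S^i(S^j(V)) \Rightarrow h^{i-j}$ or the corresponding one over $\Omega(K)$ degenerates to recover $V$. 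Translating Kohlhaase's $\Omega(K)$-module statement through the equivalences \eqref{f:K-Kohlhaase}, \eqref{f:Kohlhaase} identifies his biduality map with our $\eta_V$ up to the signs $(-1)^{\ell j}$ built into the formula for $\eta_{V^\ell}$, which only affect the identification of maps, not whether they are isomorphisms.

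The main obstacle I anticipate is precisely this last identification: one must verify carefully that the explicit map $\eta_{V^\ell}(v)((f_{i,j})_i) = (-1)^{\ell j} f_{\ell,j}(v)$, after passing to the limit over compact open $K$ and dualizing via Pontrjagin duality, matches the biduality morphism for $\Omega(K)$-modules that underlies Kohlhaase's result — in particular that it is not the zero map and has the right source and target after the dimension shift by $d$. The sign bookkeeping and the compatibility of the direct limit over $K$ with the relevant $\Ext$-pairings is the delicate part; everything else (the way-out dévissage, the preservation of admissibility, boundedness) is formal given the results already in the excerpt.
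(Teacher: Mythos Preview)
Your overall strategy matches the paper's: both invoke the way-out lemma (\cite{Har} Prop.\ I.7.1) to reduce from arbitrary complexes with admissible cohomology to a single admissible representation $V$ in degree zero, and both then appeal to \cite{Koh} for the actual involutivity. The difference lies in how far the reduction is pushed before citing Kohlhaase. You stop at a general admissible $V$ and invoke Kohlhaase's biduality spectral sequence for such $V$, accepting as the residual obstacle the identification of his $\Omega(K)$-module biduality map with your explicit $\eta_V$. The paper instead carries the reduction one step further: it first observes that $\eta_V$ is unchanged upon restriction to a compact open $K$, allowing one to assume $G$ is compact; then it applies \cite{Har} Prop.\ I.7.1(iv) with the single-object class $\mathcal{P} = \{C^\infty(G,k)\}$ (into which every admissible representation embeds via Pontrjagin duality of a presentation of $V^\vee$), so that only $\eta_{C^\infty(G,k)}$ needs to be checked. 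For that one object Kohlhaase's Prop.\ 3.13 gives $R\underline{\Hom}(C^\infty(G,k),k) \simeq (\chi_G \otimes C^\infty(G,k))[-d]$, and applying this twice gives the result, with the identification of the map reduced to a single explicit case rather than a general compatibility argument.

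What the paper's extra reduction buys is precisely the elimination of the obstacle you flag: instead of matching two biduality maps for all admissible $V$ through the equivalences \eqref{f:K-Kohlhaase}--\eqref{f:Kohlhaase}, one only has to trace the map for $C^\infty(G,k)$, where everything is computable. Your route is not wrong, but the bookkeeping you anticipate is genuinely heavier; the paper's approach trades that for the (easy) verification that admissible representations embed into copies of $C^\infty(G,k)$.

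One minor point: your justification that $\underline{\Hom}^\bullet(D^\bullet,\mathcal{J}^\bullet)$ computes $R\underline{\Hom}(D^\bullet,k)$ is slightly off as stated (homotopical injectivity of $\mathcal{J}^\bullet$ controls $\Hom^\bullet$, not $\underline{\Hom}^\bullet$ directly); the correct reason is Proposition~\ref{prop:total-inner-Hom}, which says the derived internal Hom is computed by resolving the \emph{second} variable only.
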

\begin{proof}
Since we have a natural transformation between way-out functors the lemma on way-out-functors (\cite{Har} Prop.\ I.7.1(iii)) tells us that we need to establish the assertion only in the case where our complex is a single admissible representation (viewed as a complex concentrated in degree zero). In fact, by loc.\ cit.\ Prop.\ I.7.1(iv) we can go one step further. Suppose given a class $\mathcal{P}$ of admissible representations such that every admissible representation is embeddable into a finite direct sum of representations in this class. Then it suffices to check the assertion for representations in $\mathcal{P}$. We cannot apply this directly, though. First let us fix a compact open subgroup $K$ in $G$. Then we observe:
\begin{itemize}
  \item[--] Any admissible $G$-representation $V$ is also admissible as a $K$-representation;
  \item[--] $k \xrightarrow{\simeq} \mathcal{J}^\bullet$ is also an injective resolution in $\Mod(K)$;
  \item[--] the natural transformation $\eta_V$ remains the same if constructed for $V$ considered only as a $K$-representation.
\end{itemize}
This means that, for the purposes of our proof, we may assume that our group $G$ is compact. Let $C^\infty(G,k)$ denote, as before, the vector space of $k$-valued locally constant functions on $G$. Equipped with the left translation action it is an admissible smooth $G$-representation. We have $C^\infty(G,k)^\vee = \Omega(G)$. Let $V$ be any admissible representation in $\Mod(G)$. Then $V^\vee$ is a finitely generated (pseudocompact) $\Omega(G)$-module (\cite{Koh} Prop.\ 1.9(i)). Hence we find a surjection $\Omega(G)^m \twoheadrightarrow V^\vee$ in $\Mod_{pc}(G)$ for some integer $m \geq 0$. It is the dual of an injective map $V \hookrightarrow C^\infty(G,k)^m$ in $\Mod(G)$. Therefore we can take the single object $C^\infty(G,k)$ for the class $\mathcal{P}$. By \cite{Koh} Prop.\ 3.13 we have, for any integer $j$, that
\begin{equation*}
  R^j\underline{\Hom}(C^\infty(G,k),k) = S^j(C^\infty(G,k)) \cong
  \begin{cases}
  \chi_G \otimes_k C^\infty(G,k) & \text{for $j = d$},  \\
  0   & \text{otherwise},
  \end{cases}
\end{equation*}
where $\chi_G : G \rightarrow k^\times$ is Kohlhaase's duality character. Hence $R\underline{\Hom}(C^\infty(G,k),k) \simeq (\chi_G \otimes_k C^\infty(G,k))[-d]$ and then $R\underline{\Hom}(R\underline{\Hom}(C^\infty(G,k),k),k) \simeq C^\infty(G,k)$. One checks from the proof in loc.\ cit.\ that the latter quasi-isomorphism is induced by the natural transformation $\eta_{C^\infty(G,k)}$.
\end{proof}

\begin{corollary}\label{coro:involutive}
   On $D_{adm}(G)$ the functor $R\underline{\Hom}(-,k)$ is involutive.
\end{corollary}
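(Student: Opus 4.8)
The strategy is to read Corollary \ref{coro:involutive} off Proposition \ref{prop:involutive} once the target of the biduality morphism has been identified with a genuine double dual. By Lemma \ref{lemma:admissible} the functor $\mathbb{D} := R\underline{\Hom}(-,k)$ maps $D_{adm}(G)$ to itself, so $\mathbb{D}\circ\mathbb{D}$ is an endofunctor of $D_{adm}(G)$; what has to be produced is a natural isomorphism $\id_{D_{adm}(G)} \xrightarrow{\sim} \mathbb{D}\circ\mathbb{D}$.

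First I would fix an injective resolution $k \xrightarrow{\simeq} \mathcal{J}^\bullet$ in $\Mod(G)$. Being a bounded-below complex of injectives it is homotopically injective, so Proposition \ref{prop:total-inner-Hom} computes $\mathbb{D}(V^\bullet) = \underline{\Hom}^\bullet(V^\bullet,\mathcal{J}^\bullet)$ for every complex $V^\bullet$, and applying this formula twice gives $\mathbb{D}(\mathbb{D}(V^\bullet)) = \underline{\Hom}^\bullet(\underline{\Hom}^\bullet(V^\bullet,\mathcal{J}^\bullet),\mathcal{J}^\bullet)$ --- exactly the complex appearing on the right of the chain map $\eta_{V^\bullet}$ of \eqref{f:biduality-morphism}. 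Since $\eta_{V^\bullet}$ is given by the explicit formula $\eta_{V^\ell}(v)((f_{i,j})_i) = (-1)^{\ell j} f_{\ell,j}(v)$, which is manifestly natural in $V^\bullet$ for morphisms of complexes, the family $(\eta_{V^\bullet})$ is a morphism of functors from the identity to $\underline{\Hom}^\bullet(\underline{\Hom}^\bullet(-,\mathcal{J}^\bullet),\mathcal{J}^\bullet)$ on the category of complexes. Because $\mathbb{D}$ is a genuine derived functor, both this composite and the identity carry quasi-isomorphisms to isomorphisms in $D(G)$; hence $(\eta_{V^\bullet})$ descends to a natural transformation $\eta\colon \id_{D(G)} \to \mathbb{D}\circ\mathbb{D}$.

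It then suffices to restrict to $D_{adm}(G)$: if $V^\bullet$ has admissible cohomology, Proposition \ref{prop:involutive} asserts precisely that $\eta_{V^\bullet}$ is a quasi-isomorphism, i.e.\ an isomorphism in $D(G)$. Therefore $\eta$ restricts to a natural isomorphism $\id_{D_{adm}(G)} \xrightarrow{\sim} \mathbb{D}\circ\mathbb{D}\,|_{D_{adm}(G)}$, which is the claim that $R\underline{\Hom}(-,k)$ is involutive on $D_{adm}(G)$.

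The only genuine bookkeeping --- and the step I expect to cost a line or two --- is the passage in the second paragraph from the chain-level naturality of $\eta_{V^\bullet}$ to naturality in $D(G)$, together with checking that the resulting $\eta$ agrees, up to the canonical isomorphisms already built into $\mathbb{D}$, with the biduality transformation coming from the closed monoidal adjunctions of Proposition \ref{prop:total-inner-Hom}, and is in particular independent of the chosen resolution $\mathcal{J}^\bullet$. This is routine homological algebra (any two homotopically injective resolutions of $k$ are homotopy equivalent, and $\underline{\Hom}^\bullet(V^\bullet,-)$ preserves homotopies), but it is what turns the pointwise Proposition \ref{prop:involutive} into a statement about the functor.
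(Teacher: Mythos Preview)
Your proposal is correct and matches the paper's approach: the paper states the corollary without proof, treating it as an immediate consequence of Proposition~\ref{prop:involutive} (together with Lemma~\ref{lemma:admissible} for preservation of $D_{adm}(G)$), and your argument is precisely the routine unpacking of what ``involutive'' means, via the natural biduality transformation $\eta_{V^\bullet}$ already introduced. The extra care you take with naturality and independence of the resolution is exactly the bookkeeping the paper leaves implicit.
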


\section{Globally admissible complexes}

In this section we will generalize some of the results in section \ref{sec:derived-duality} to a subcategory of $D(G)$ which is potentially larger than $D_{adm}(G)$. The possible drawback is that the defining condition for this subcategory is a ``global'' finiteness condition.

We let $\Vect$ denote the abelian category of $k$-vector spaces and $D(k)$ its unbounded derived category. In the following we fix an open subgroup $U \subseteq G$ which is pro-$p$ and torsion free. As recalled in Remark \ref{rem:global-dim} the functor
\begin{align*}
  \Mod(G) & \longrightarrow \Vect \\
  V & \longmapsto V^U = H^0(U,V)
\end{align*}
has finite cohomological dimension $d$. Hence its total derived functor $RH^0(U,-) : D(G) \longrightarrow D(k)$ exists (cf.\ \cite{Har} Cor.\ I.5.3)).

On the other hand the functor $\Hom_k(-,k)$ on $\Vect$ of taking the $k$-linear dual is exact and therefore passes directly to a functor form $D(k)^{op}$ to $D(k)$ which, for simplicity, we also denote by $\Hom_k(-,k)$.

\begin{proposition}\label{prop:diagram}
   The diagram
\begin{equation*}
  \xymatrix{
    D(G)^{op} \ar[d]_{\mathrm{forget}} \ar[rr]^-{R\underline{\Hom}(-,k)} && D(G) \ar[d]^{\mathrm{forget}} \\
    D(U)^{op} \ar[d]_{RH^0(U,-)} \ar[rr]^-{R\underline{\Hom}(-,k)} && D(U) \ar[d]^{RH^0(U,-)} \\
    D(k)^{op} \ar[rr]^-{\Hom_k(-,k)[-d]} && D(k)   }
\end{equation*}
is commutative (up to a natural isomorphism).
\end{proposition}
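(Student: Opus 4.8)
The plan is to verify the two squares separately; each being commutative up to a natural isomorphism, so is the outer rectangle.

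\emph{The upper square.} For $V^\bullet$ in $D(G)$ I claim that restricting $R\underline{\Hom}(V^\bullet,k)$ to $D(U)$ gives, naturally, $R\underline{\Hom}(V^\bullet|_U,k)$. Two facts suffice. First, by the limit formula \eqref{f:limit-formula} the complex $\underline{\Hom}^\bullet(V_1^\bullet,V_2^\bullet)$ is the inductive limit $\varinjlim_K \Hom^\bullet_{\Mod(K)}(V_1^\bullet,V_2^\bullet)$ over the compact open subgroups $K$; since $U$ is open in $G$, the compact open subgroups of $U$ are cofinal among those of $G$, so this complex --- and hence the bifunctor $\underline{\Hom}^\bullet$ --- is unaltered, apart from which ambient group it records as acting, when one passes from $\Mod(G)$ to $\Mod(U)$. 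In particular restriction commutes with $\underline{\Hom}^\bullet$. Second, if $k\xrightarrow{\simeq}J^\bullet$ is a homotopically injective resolution in $\Mod(G)$, then so is $k\xrightarrow{\simeq}J^\bullet|_U$ in $\Mod(U)$: it remains a resolution, and $J^\bullet|_U$ stays homotopically injective because restriction $\Mod(G)\to\Mod(U)$ is right adjoint to the \emph{exact} compact induction functor $\ind_U^G$, and a right adjoint of an exact functor preserves homotopically injective complexes. Since $R\underline{\Hom}$ can be computed from any homotopically injective resolution (Proposition \ref{prop:total-inner-Hom}), this yields the natural isomorphism
\[
  R\underline{\Hom}(V^\bullet,k)|_U \;=\; \underline{\Hom}^\bullet(V^\bullet,J^\bullet)|_U \;=\; \underline{\Hom}^\bullet(V^\bullet|_U,J^\bullet|_U) \;=\; R\underline{\Hom}(V^\bullet|_U,k)\ .
\]

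\emph{Reducing the lower square.} Everything in Section \ref{sec:derived-duality} applies verbatim to the $p$-adic Lie group $U$ in place of $G$. Applying the second adjunction of Proposition \ref{prop:total-inner-Hom} for $U$ to the triple $(k,V^\bullet,k)$, and using $k\otimes_k V^\bullet=V^\bullet$ together with the tautology $RH^0(U,-)=R\Hom_{\Mod(U)}(k,-)$, one gets a natural isomorphism $RH^0(U,R\underline{\Hom}(V^\bullet,k))\cong R\Hom_{\Mod(U)}(V^\bullet,k)$ for $V^\bullet$ in $D(U)$. Thus the lower square is equivalent to the assertion $(\ast)$: there is a natural isomorphism $R\Hom_{\Mod(U)}(V^\bullet,k)\cong\Hom_k(RH^0(U,V^\bullet),k)[-d]$ for $V^\bullet$ in $D(U)$.

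\emph{Proof of $(\ast)$.} I first construct a natural transformation from left to right. The evaluation morphism $R\underline{\Hom}(V^\bullet,k)\otimes_k V^\bullet\to k$ (the counit of the adjunction of Corollary \ref{sym-monoidal} for $U$), composed with the lax monoidal cup-product morphisms of $RH^0(U,-)$ and with a trace morphism $RH^0(U,k)\to k[-d]$ --- which exists and is unique up to a scalar, since $H^j(U,k)=0$ for $j>d$ and $H^d(U,k)\cong k$ --- gives, using the isomorphism of the previous paragraph, a natural pairing $R\Hom_{\Mod(U)}(V^\bullet,k)\otimes_k RH^0(U,V^\bullet)\to k[-d]$, whose $k$-linear adjoint is the desired transformation. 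Both functors in $(\ast)$, viewed as functors $D(U)^{op}\to D(k)$, are way-out in both directions: the cohomological dimension of $U$ equals $d$ (Remark \ref{rem:global-dim}), so $R\Hom_{\Mod(U)}(-,k)$ and $RH^0(U,-)$ send a representation placed in degree $0$ to a complex with cohomology in degrees $[0,d]$, while $\Hom_k(-,k)[-d]$ only reflects and shifts. By the lemma on way-out functors (\cite{Har} Prop.\ I.7.1(iii)) it therefore suffices to check that our transformation is an isomorphism on an arbitrary representation $V$ in $\Mod(U)$, placed in degree $0$. For such $V$ the $j$-th cohomology groups of the two sides are $\Ext^j_{\Mod(U)}(V,k)$ and $\Hom_k(H^{d-j}(U,V),k)$, and the pairing (cup product, then evaluation, then trace) is on cohomology the Yoneda product $\Ext^j_{\Mod(U)}(V,k)\times H^{d-j}(U,V)\to H^d(U,k)\cong k$; hence the map induced between those cohomology groups by our transformation is precisely the duality isomorphism \eqref{f:duality-iso}. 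Thus the transformation is a quasi-isomorphism on every $V$, and so on all of $D(U)$; this establishes $(\ast)$, and with it the lower square and the proposition.

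The upper square and the reduction step are formal; the real content is $(\ast)$, and the point requiring care there is to set up the left-to-right transformation so that it manifestly restricts, on an individual representation, to the Yoneda pairing of \eqref{f:duality-iso} --- a routine but slightly fiddly unwinding of the tensor--Hom and $H^0(U,-)$-adjunctions. Once that compatibility is in place, the way-out lemma, whose hypotheses are immediate from Remark \ref{rem:global-dim}, closes the argument.
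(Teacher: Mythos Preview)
Your proof is correct and follows essentially the same route as the paper's: both handle the upper square via the fact that restriction along $U\subseteq G$ has exact left adjoint $\ind_U^G$ and hence preserves homotopically injective resolutions, both reduce the lower square via the adjunction of Proposition~\ref{prop:total-inner-Hom} to exhibiting a natural isomorphism $R\Hom_{\Mod(U)}(V^\bullet,k)\cong \Hom_k(RH^0(U,V^\bullet),k)[-d]$, and both construct the comparison map from a pairing landing in $k[-d]$ and then invoke the way-out lemma together with \eqref{f:duality-iso} on single representations. The only cosmetic difference is that the paper writes the pairing directly as the Yoneda composition $R\Hom_{\Mod(U)}(V^\bullet,k)\times R\Hom_{\Mod(U)}(k,V^\bullet)\to R\Hom_{\Mod(U)}(k,k)\to k[-d]$, whereas you package the same map as evaluation followed by the lax monoidal structure of $RH^0(U,-)$; under the adjunction identification these are the same morphism.
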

\begin{proof}
The upper rectangle is commutative since the forgetful functor $\Mod(G) \rightarrow \Mod(U)$, having the compact induction $\ind_U^G$ as an exact left adjoint, preserves injective as well as homotopically injective resolutions. For the lower triangle we first observe that the second adjunction formula in Prop.\ \ref{prop:total-inner-Hom} tells us that the composed functor $RH^0(U,R\underline{\Hom}(-,k))$ is naturally isomorphic to the functor $R\Hom_{\Mod(U)}(-,k)$. Hence it remains to exhibit a natural isomorphism between $R\Hom_{\Mod(U)}(-,k)$ and $\Hom_k(RH^0(U,-),k)[-d]$. For this we start with the Yoneda pairing
\begin{equation*}
  R\Hom_{\Mod(U)}(V^\bullet,k) \times R\Hom_{\Mod(U)}(k,V^\bullet) \longrightarrow R\Hom_{\Mod(U)}(k,k) \ .
\end{equation*}
By our assumption on the group $U$ the natural homomorphism $\sigma_{\leq d} R\Hom_{\Mod(U)}(k,k) \xrightarrow{\cong} R\Hom_{\Mod(U)}(k,k)$ is an isomorphism and the upper truncation $\sigma_{\leq d} R\Hom_{\Mod(U)}(k,k)$ at degree $d$ (cf.\ \cite{Har} p.\ 69/70) maps to its cohomology $H^d(U,k)[-d] \cong k[-d]$ in degree $d$. The Yoneda pairing therefore induces a pairing
\begin{equation*}
  R\Hom_{\Mod(U)}(V^\bullet,k) \times R\Hom_{\Mod(U)}(k,V^\bullet) \longrightarrow k[-d]
\end{equation*}
and hence a natural homomorphism
\begin{equation*}
  \Hom_k(R\Hom_{\Mod(U)}(k,V^\bullet),k[-d]) \longrightarrow R\Hom_{\Mod(U)}(V^\bullet,k) \ .
\end{equation*}
To show that it is an isomorphism we need to check that the map induced on cohomology
\begin{equation}\label{f:hyper-duality-iso}
  \Hom_k(H^{d-*}U,V^\bullet),k) \longrightarrow \Ext^*_{\Mod(U)}(V^\bullet,k)
\end{equation}
is bijective. If $V^\bullet$ is a single representation in degree zero then we have seen this already in \eqref{f:duality-iso}. By the Example 1 on p.\ 68 in \cite{Har} the functor $RH^0(U,-)$ and hence also the functor $\Hom_k(R\Hom_{\Mod(U)}(k,-),k[-d])$ are way-out in both directions. Similarly, by Remark \ref{rem:global-dim} and \cite{Har} Prop.\ I.7.6 the functor $R\Hom_{\Mod(U)}(-,k)$ is way-out in both directions as well. Hence it follows from \cite{Har} Prop.\ I.7.1(iii) that \eqref{f:hyper-duality-iso} always is bijective.
\end{proof}

\begin{definition}
   A complex $V^\bullet$ in $D(G)$ is globally admissible if its cohomology groups $H^i(U,V^\bullet)$, for any $i \in \mathbb{Z}$, are finite dimensional vector spaces. Let $D(G)^a \subseteq D(G)$ denote the strictly full triangulated subcategory of all globally admissible complexes.
\end{definition}

We will see only later in Cor.\ \ref{coro:independent} that this definition, indeed, does not depend on the choice of $U$.
To rephrase the definition let $D_{fin}(k) \subseteq D(k)$ denote the strictly full triangulated subcategory of all objects all of whose cohomology vector spaces are finite dimensional. Then $D(G)^a$ is the full preimage in $D(G)$ of $D_{fin}(k)$ under the functor $RH^0(U,-)$.

\begin{corollary}\label{coro:duality-a}
  The duality functor $R\underline{\Hom}(-k)$ respects the subcategory $D(G)^a$.
\end{corollary}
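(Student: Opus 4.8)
The plan is to deduce this immediately from the commutative diagram of Proposition \ref{prop:diagram}, which already contains all the substantive work. Recall that, by the reformulation just given, a complex $V^\bullet$ lies in $D(G)^a$ precisely when $RH^0(U,V^\bullet)$ lies in $D_{fin}(k)$. So let $V^\bullet \in D(G)^a$; I must show $RH^0(U,R\underline{\Hom}(V^\bullet,k))$ lies in $D_{fin}(k)$.

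First I would invoke Proposition \ref{prop:diagram}: going down the right column and then left along the bottom, resp.\ first left along the top and then down, yields a natural isomorphism
\begin{equation*}
  RH^0(U,R\underline{\Hom}(V^\bullet,k)) \;\cong\; \Hom_k\big(RH^0(U,V^\bullet),k\big)[-d] \ .
\end{equation*}
(The upper rectangle of that diagram is irrelevant here, or rather it just says the passage through $D(U)$ agrees with the direct computation in $D(G)$.)

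Next I would observe that the functor $\Hom_k(-,k)$ on $D(k)$ preserves $D_{fin}(k)$. This is the only elementary point to record: since $\Hom_k(-,k)$ is exact on $\Vect$, it commutes with the formation of cohomology, so $H^i(\Hom_k(C^\bullet,k)) \cong \Hom_k(H^{-i}(C^\bullet),k)$ for any $C^\bullet$ in $D(k)$; and the $k$-linear dual of a finite-dimensional vector space is again finite dimensional. The shift by $[-d]$ obviously does not affect finite dimensionality of the cohomology spaces. Applying this to $C^\bullet = RH^0(U,V^\bullet)$, which lies in $D_{fin}(k)$ by hypothesis, shows that the right-hand side above lies in $D_{fin}(k)$, hence so does $RH^0(U,R\underline{\Hom}(V^\bullet,k))$, i.e.\ $R\underline{\Hom}(V^\bullet,k) \in D(G)^a$.

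There is essentially no obstacle: all the difficulty has been absorbed into Proposition \ref{prop:diagram} (in particular the duality isomorphism \eqref{f:hyper-duality-iso} and the way-out argument proving it), and what remains is the trivial stability of $D_{fin}(k)$ under $k$-linear dualization and shift.
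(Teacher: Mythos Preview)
Your proposal is correct and is essentially identical to the paper's own proof, which simply states that the result is immediate from Proposition~\ref{prop:diagram} since $\Hom_k(-,k)$ respects $D_{fin}(k)$. You have just spelled out the one-line argument in slightly more detail.
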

\begin{proof}
This is immediate from Prop.\ \ref{prop:diagram} since the functor $\Hom_k(-,k)$ on $D(k)$ respects the subcategory $D_{fin}(k)$.
\end{proof}

In \eqref{f:biduality-morphism} we had introduced the biduality morphism $\eta_{V^\bullet} : V^\bullet \rightarrow R\underline{\Hom}(R\underline{\Hom}(V^\bullet,k),k)$. Our further analysis of it will be based upon the following general observation.

\begin{lemma}\label{lemma:iso-criterion}
  A homomorphism $V_1^\bullet \rightarrow V_2^\bullet$ in $D(G)$ is an isomorphism if and only the induced map $H^i(U,V_1^\bullet) \rightarrow H^i(U,V_2^\bullet)$, for any $i \in \mathbb{Z}$, is bijective.
\end{lemma}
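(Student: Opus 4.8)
The statement is essentially the analogue of the classical fact that a map in a derived category is an isomorphism iff it induces isomorphisms on all cohomology of a conservative family of functors. Here the relevant functor is $RH^0(U,-) : D(G) \to D(k)$, so the plan is to show that this functor is \emph{conservative}, i.e.\ detects isomorphisms, and that its hypercohomology groups are precisely the $H^i(U,V^\bullet)$ appearing in the statement.

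First I would complete a morphism $V_1^\bullet \to V_2^\bullet$ in $D(G)$ to a distinguished triangle $V_1^\bullet \to V_2^\bullet \to C^\bullet \to V_1^\bullet[1]$. Applying the exact functor $RH^0(U,-)$ yields a distinguished triangle in $D(k)$, and its long exact cohomology sequence shows that the maps $H^i(U,V_1^\bullet) \to H^i(U,V_2^\bullet)$ are all bijective if and only if $H^i(U,C^\bullet) = 0$ for all $i$, i.e.\ $RH^0(U,C^\bullet) \simeq 0$. So the lemma reduces to the assertion: a complex $C^\bullet$ in $D(G)$ is zero iff $RH^0(U,C^\bullet)$ is acyclic. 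One direction is trivial; for the other I would argue that if $C^\bullet \neq 0$ in $D(G)$, then it has some nonzero cohomology representation $h^n(C^\bullet)$ in $\Mod(G)$, and I must produce a nonvanishing hypercohomology group $H^i(U,C^\bullet)$.

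The key input here is that the functor $V \mapsto V^U = H^0(U,V)$ on $\Mod(G)$ is \emph{faithful} on objects: if $V \neq 0$ in $\Mod(G)$ then $V^U \neq 0$, because any nonzero smooth vector is fixed by a small enough open subgroup, and by shrinking we may take $U$ pro-$p$ and then, since $V$ is a smooth $\mathbb{F}_p$-representation of the pro-$p$ group $U'\subseteq U$ generated by the orbit, $V^{U'}\neq 0$; intersecting with $U$ (using that $U$ itself is pro-$p$) gives $V^U\neq 0$. (Concretely: a nonzero pro-$p$ group acting smoothly on a nonzero $\mathbb{F}_p$-vector space has nonzero fixed vectors.) Granting this, let $n$ be the largest degree with $h^n(C^\bullet) \neq 0$; by truncation I may assume $C^\bullet$ has cohomology only in degrees $\le n$. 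Then, using the hypercohomology spectral sequence $E_2^{i,j} = H^i(U, h^j(C^\bullet)) \Rightarrow H^{i+j}(U,C^\bullet)$ (which converges because $H^i(U,-)$ has bounded cohomological dimension $d$, Remark \ref{rem:global-dim}), the corner term $E_2^{d,n} = H^d(U, h^n(C^\bullet))$ is a permanent cycle contributing to $H^{d+n}(U,C^\bullet)$ — but I need it nonzero. Rather than invoke Poincar\'e duality at the level of a single representation, the cleanest route is to use the duality isomorphism \eqref{f:duality-iso}, $H^d(U,W)^\vee \cong \Hom_{\Mod(U)}(W, k)$ for $W = h^n(C^\bullet)$: since $W\neq 0$ is smooth, it has a nonzero $U$-fixed vector, giving a nonzero map $k \to W$; dualizing with Pontrjagin duality on $\Mod_{pc}(\Omega(U))$ gives a nonzero map $W^\vee \to k^\vee$, i.e.\ a nonzero element of $\Hom_{\Mod(U)}(W,k)$ via $\Hom_{\Mod(\Omega(U))}(k, W^\vee)$ under \eqref{f:K-Kohlhaase} — hence $H^d(U,W) \neq 0$. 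Since $E_2^{d,n}$ sits in the maximal total degree with a nonzero $E_2$ entry along antidiagonal $d+n$ with maximal $j$, no differential can kill it, so $H^{d+n}(U,C^\bullet) \neq 0$, a contradiction.

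The main obstacle is the non-vanishing of the corner term of the spectral sequence — equivalently, the faithfulness of $RH^0(U,-)$ on nonzero objects. The subtlety is that $H^0(U,-)$ by itself is not conservative on complexes (only on representations), so one genuinely needs the top-degree cohomology $H^d(U,-)$ to "see" the highest nonvanishing cohomology representation; this is where the Poincar\'e-duality property of $U$ (which is pro-$p$ torsion free of dimension $d$) enters essentially, and it is why the lemma is stated for such $U$. Everything else — the triangle, the long exact sequence, the truncation reduction, and the spectral sequence bookkeeping — is routine.
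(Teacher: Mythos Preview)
Your overall strategy --- reduce to conservativity of $RH^0(U,-)$ via the cone, then kill the cone by a hypercohomology spectral-sequence corner argument --- is reasonable and in spirit close to the paper's Lemma~\ref{lemma:i-i}. The paper itself takes a different route: it simply invokes the equivalence $D(G)\simeq D(H_U^\bullet)$ from \cite{DGA} Thm.~9, under which $H^*(U,-)$ becomes ordinary cohomology of a dg-module, so that detection of isomorphisms is automatic. Your argument, however, contains a genuine error and a separate gap.

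The error is the claim that $H^d(U,W)\neq 0$ for every nonzero smooth $W$. This is false: take $W=C^\infty(U,k)$. One checks (via $\phi\mapsto \phi(\cdot)(1)$) that $\Hom_{\Mod(U)}(V,C^\infty(U,k))\cong \Hom_k(V,k)$ naturally in $V$, so $C^\infty(U,k)$ is injective in $\Mod(U)$ and hence $H^j(U,C^\infty(U,k))=0$ for all $j>0$, in particular for $j=d$. Your justification goes wrong at the Pontrjagin step: a nonzero map $k\to W$ dualizes to a nonzero map $W^\vee\to k$ in $\Mod_{pc}(\Omega(U))$, but this corresponds back to an element of $\Hom_{\Mod(U)}(k,W)=W^U$, \emph{not} to an element of $\Hom_{\Mod(U)}(W,k)\cong \Hom_{\Omega(U)}(k,W^\vee)$. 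You have produced a map in the wrong direction; there is no reason for $(W^\vee)^U$ to be nonzero in general, and indeed $C^\infty(U,k)_U=0$ in characteristic $p$. The correct non-vanishing input is $H^0(U,W)=W^U\neq 0$, so the argument should use the corner $E_2^{0,m}$ with $m$ the \emph{smallest} degree for which $h^m(C^\bullet)\neq 0$.

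The remaining gap is that for an unbounded complex there need be no largest (nor smallest) such degree, so the bare corner argument does not apply. One must add a truncation step: for any fixed $i$ with $h^i(C^\bullet)\neq 0$, show first that the hypercohomology of the relevant truncation is controlled by that of $C^\bullet$, and then run the $(0,i)$-corner argument on the bounded-below complex $\tau^{\geq i}C^\bullet$. This is exactly the kind of reduction underlying Lemma~\ref{lemma:i-i} (whose proof the paper imports from \cite{DGA} Prop.~5); once that lemma is available, your cone reduction finishes the job.
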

\begin{proof}
This is an immediate consequence of the equivalence $H$ between $D(G)$ and the derived category of a certain differential graded algebra in \cite{DGA} Thm.\ 9, which we will recall in section \ref{sec:dgHecke}. By construction the functor $H$ has the property that $h^*(H(-)) = H^*(U,-)$.
\end{proof}

\begin{proposition}\label{prop:involutive2}
   The biduality morphism $\eta_{V^\bullet}$, for any $V^\bullet$ in $D(G)$, is an isomorphism if and only if $V^\bullet$ lies in $D(G)^a$.
\end{proposition}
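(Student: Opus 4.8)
\textbf{Proof strategy for Proposition \ref{prop:involutive2}.}
The plan is to pass everything through the functor $RH^0(U,-)$ and invoke Lemma \ref{lemma:iso-criterion}, which says a morphism in $D(G)$ is an isomorphism precisely when it induces isomorphisms on all $H^i(U,-)$. Thus $\eta_{V^\bullet}$ is an isomorphism if and only if the map
\begin{equation*}
  H^i(U,V^\bullet) \longrightarrow H^i(U,R\underline{\Hom}(R\underline{\Hom}(V^\bullet,k),k))
\end{equation*}
is bijective for every $i$. The key computational input is Proposition \ref{prop:diagram}: applying it once gives a natural isomorphism $RH^0(U,R\underline{\Hom}(W^\bullet,k)) \cong \Hom_k(RH^0(U,W^\bullet),k)[-d]$ for any $W^\bullet$ in $D(G)$. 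Applying this twice — first with $W^\bullet = R\underline{\Hom}(V^\bullet,k)$ and then with $W^\bullet = V^\bullet$ — shows that $RH^0(U,-)$ carries the target of $\eta_{V^\bullet}$ to the double $k$-linear dual $\Hom_k(\Hom_k(RH^0(U,V^\bullet),k),k)$, the shifts $[-d]$ cancelling. So after applying $RH^0(U,-)$, the morphism $\eta_{V^\bullet}$ becomes (canonically identified with) the evaluation biduality map $RH^0(U,V^\bullet) \to \Hom_k(\Hom_k(RH^0(U,V^\bullet),k),k)$ in $D(k)$.

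Granting that identification, the proposition reduces to a purely linear-algebra statement about a complex of $k$-vector spaces $C^\bullet := RH^0(U,V^\bullet)$: the double-dual evaluation map $C^\bullet \to \Hom_k(\Hom_k(C^\bullet,k),k)$ is a quasi-isomorphism if and only if each cohomology space $H^i(C^\bullet)$ is finite dimensional, i.e.\ if and only if $V^\bullet$ lies in $D(G)^a$. One direction is clear, since over a field $\Hom_k(-,k)$ is exact, so the map on $H^i$ is just the canonical map $H^i(C^\bullet) \to H^i(C^\bullet)^{\vee\vee}$, which is an isomorphism exactly when $H^i(C^\bullet)$ is finite dimensional; this proves both implications at once once the identification with the evaluation map is in hand. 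Here one must be a little careful about the ``infinite product versus direct sum'' issue hidden in the definition \eqref{f:biduality-morphism} of $\eta_{V^\bullet}$ and about signs (the $(-1)^{\ell j}$ twist), but these are cosmetic.

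The main obstacle, and the step deserving genuine care, is verifying that under the natural isomorphisms furnished by Proposition \ref{prop:diagram} the morphism $RH^0(U,\eta_{V^\bullet})$ really does correspond to the standard evaluation map on $C^\bullet$, rather than to some other map inducing the same thing up to automorphism. This requires unwinding the construction of the isomorphism in Proposition \ref{prop:diagram} — which itself rests on the Yoneda-pairing description of \eqref{f:duality-iso}/\eqref{f:hyper-duality-iso} — and checking compatibility of the biduality morphism $\eta_{V^\bullet}$ (built from the explicit formula $\eta_{V^\ell}(v)((f_{i,j})_i)=(-1)^{\ell j}f_{\ell,j}(v)$) with the two applications of that Yoneda pairing. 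Concretely, one chooses an injective resolution $k \xrightarrow{\simeq}\mathcal{J}^\bullet$ in $\Mod(G)$, which restricts to one in $\Mod(U)$, represents $R\underline{\Hom}(V^\bullet,k)$ by $\underline{\Hom}^\bullet(V^\bullet,\mathcal{J}^\bullet)$, and traces the composite pairing
\begin{equation*}
  \underline{\Hom}^\bullet(V^\bullet,\mathcal{J}^\bullet)^U \otimes_k \underline{\Hom}^\bullet(k,V^\bullet)^U \longrightarrow \underline{\Hom}^\bullet(k,\mathcal{J}^\bullet)^U
\end{equation*}
through the definitions; the claim is that the resulting biduality map on $U$-invariants agrees on the nose with ``evaluate a cochain on a cochain''. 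Once this bookkeeping is done the proposition follows formally. It may be cleaner, and I would do this if the direct check proves awkward, to reduce first to $V^\bullet$ a single representation in degree zero via the way-out lemma (\cite{Har} Prop.\ I.7.1), where the identification of $\eta_V$ with the classical double-duality map is the content of the last line of the proof of Proposition \ref{prop:involutive}, and then propagate along distinguished triangles.
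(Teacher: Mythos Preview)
Your proposal is correct and follows essentially the same route as the paper: reduce via Lemma \ref{lemma:iso-criterion}, use Proposition \ref{prop:diagram} (twice) to identify the target cohomology with the $k$-linear double dual of $H^i(U,V^\bullet)$, and then carry out an explicit computation with an injective resolution $\mathcal{J}^\bullet$ of $k$ to verify that $H^i(U,\eta_{V^\bullet})$ matches the canonical biduality map $b$ --- the paper finds they agree up to the sign $(-1)^{i(d-i)}$, confirming your remark that the sign issue is cosmetic. Your suggested fallback via the way-out lemma is not needed (and the paper does not use it here); the direct cochain-level check goes through cleanly once one represents everything in $K(U)$ using $\underline{\Hom}^\bullet(-,\mathcal{J}^\bullet)$.
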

\begin{proof}
According to Lemma \ref{lemma:iso-criterion} we have to check that the maps
\begin{equation*}
  H^i(U,\eta_{V^\bullet}) : H^i(U,V^\bullet) \rightarrow H^i(U, R\underline{\Hom}(R\underline{\Hom}(V^\bullet,k),k))
\end{equation*}
are bijective for any $i \in \mathbb{Z}$ if and only if $V^\bullet$ lies in $D(G)^a$. By Prop.\ \ref{prop:diagram} we have natural isomorphisms
\begin{equation*}
  \xi^i_{V^\bullet} :  H^i(U,R\underline{\Hom}(V^\bullet,k)) \xrightarrow{\;\cong\;} \Hom_k(H^{d-i}(U,V^\bullet),k) \ .
\end{equation*}
We now claim that the diagram
\begin{equation*}
  \xymatrix{
    H^i(U,V^\bullet) \ar[d]_{b} \ar[rr]^-{H^i(U,\eta_{V^\bullet})} && H^i(U, R\underline{\Hom}(R\underline{\Hom}(V^\bullet,k),k)) \ar[d]^{\xi^i_{R\underline{\Hom}(V^\bullet,k)}}_{\cong} \\
    \Hom_k(\Hom_k(H^i(U,V^\bullet),k),k) \ar[rr]^-{\Hom_k(\xi^{d-i}_{V^\bullet},k)}_-{\cong} && \Hom_k(H^{d-i}(U,R\underline{\Hom}(V^\bullet,k)),k),   }
\end{equation*}
where $b$ denotes the natural map from a $k$-vector space into its double dual, is commutative up to the sign $(-1)^{i(d-i)}$. This immediately shows that $H^i(U,\eta_{V^\bullet})$ is bijective if and only if $b$ is bijective which, of course, is the case if and only if the vector space $H^i(U,V^\bullet)$ is finite dimensional.

To establish this claim we compute $R\underline{\Hom}(-,k)$ by using an injective resolution $J^\bullet$ of $k$ in $\Mod(G)$ and hence in $\Mod(U)$. Then $R\underline{\Hom}(V^\bullet,k) = \underline{\Hom}^\bullet(V^\bullet,J^\bullet)$ by Prop.\ \ref{prop:total-inner-Hom}. Moreover the adjunction property \eqref{f:smooth-adj} implies that $\underline{\Hom}^\bullet(V^\bullet,J^\bullet)$ always is homotopically injective. Finally we may also assume that $V^\bullet$ is homotopically injective. Our diagram therefore becomes
\begin{equation*}
  \xymatrix{
    h^i((V^\bullet)^U) \ar[d]_{b} \ar[rr]^-{H^i(U,\eta_{V^\bullet})} && \Hom_{K(U)}(\prod_{j \in \mathbb{Z}} \underline{\Hom}(V^j,J^{j + \bullet}),J^\bullet[i]) \ar[d]^{\xi^i_{R\underline{\Hom}(V^\bullet,k)}}_{\cong} \\
    \Hom_k(\Hom_k(h^i((V^\bullet)^U),k),k) \ar[rr]^-{\Hom_k(\xi^{d-i}_{V^\bullet},k)}_-{\cong} && \Hom_k(\Hom_{K(U)}(V^\bullet,J^\bullet[d-i]),k),   }
\end{equation*}
where $K(U)$ denotes as usual the homotopy category of unbounded complexes in $\Mod(U)$. We first recall that, once we fix an identification $h^d((J^\bullet)^U) = H^d(U,k) \cong k$, the map $\xi_{V^\bullet}^i$ is explicitly given by
\begin{align*}
  \xi_{V^\bullet}^i : \Hom_{K(U)}(V^\bullet,J^\bullet[i]) & \longrightarrow \Hom_k(h^{d-i}((V^\bullet)^U),k)  \\
                                       [\epsilon^\bullet] & \longmapsto \Big[ [\delta_{d-i}] \longmapsto [\epsilon^{d-i}(\delta_{d-i})] \Big] \ .
\end{align*}
Now let $[v_i] \in h^i((V^\bullet)^U)$. By definition of $\eta_{V^\bullet}$ its image under the top horizontal arrow in the above diagram is the homotopy class of the homomorphism of complexes
\begin{align*}
  \prod_{j \in \mathbb{Z}} \underline{\Hom}(V^j,J^{j + \bullet}) & \longrightarrow J^\bullet[i] \\
  (f_{j,\bullet})_\bullet & \longmapsto (-1)^{i \bullet} f_{i,\bullet}(v_i) \ .
\end{align*}
of degree $i$. Under the right perpendicular arrow it is further mapped to the linear map
\begin{align}\label{f:b}
  \Hom_{K(U)}(V^\bullet,J^\bullet[d-i]) & \longrightarrow k \\
  [(f_{j,d-i})_j] & \longmapsto (-1)^{i(d-i)} [f_{i,d-i}(v_i)] \ .   \nonumber
\end{align}
But $[(f_{j,d-i})_j]$ corresponds under $\xi^{d-i}_{V^\bullet}$ to the linear map in $\Hom_k(h^i((V^\bullet)^U),k)$ sending $[\delta_i]$ to $[f_{i,d-i}(\delta_i)]$. Hence the preimage of \eqref{f:b} under the bottom horizontal map in the diagram is equal to $(-1)^{i(d-i)} b([v_i])$ as claimed.
\end{proof}

\begin{corollary}\label{coro:independent}
  The subcategory $D(G)^a$ in $D(G)$ is independent of the choice of the subgroup $U \subseteq G$.
\end{corollary}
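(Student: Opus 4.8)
The plan is to derive Corollary \ref{coro:independent} directly from Proposition \ref{prop:involutive2}, with essentially no extra work. The crucial observation is that the biduality morphism $\eta_{V^\bullet} : V^\bullet \to R\underline{\Hom}(R\underline{\Hom}(V^\bullet,k),k)$ introduced in \eqref{f:biduality-morphism}, together with the ambient duality functor $R\underline{\Hom}(-,k)$ on $D(G)$, is constructed purely from the pair $(G,k)$: the choice of an open pro-$p$ torsion free subgroup $U \subseteq G$ enters nowhere in its definition. So the property ``$\eta_{V^\bullet}$ is an isomorphism in $D(G)$'' is an intrinsic condition on $V^\bullet$, whereas Proposition \ref{prop:involutive2} identifies this intrinsic condition with the \emph{a priori} $U$-dependent condition of global admissibility.

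Concretely, I would fix two open subgroups $U_1, U_2 \subseteq G$, both pro-$p$ and torsion free, and temporarily write $D(G)^a_{U_j}$ for the strictly full triangulated subcategory of those $V^\bullet$ with $\dim_k H^i(U_j, V^\bullet) < \infty$ for all $i \in \mathbb{Z}$. Since Proposition \ref{prop:involutive2} is proved for an arbitrary but fixed such subgroup at the start of the section, it applies verbatim with $U = U_1$ and with $U = U_2$. Applying it with $U = U_1$ yields: $V^\bullet \in D(G)^a_{U_1}$ if and only if $\eta_{V^\bullet}$ is an isomorphism. Applying it with $U = U_2$ yields the identical statement with $U_1$ replaced by $U_2$. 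Comparing the two characterizations, whose right-hand sides are literally the same condition, gives $D(G)^a_{U_1} = D(G)^a_{U_2}$, which is the assertion.

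I do not expect any real obstacle: all the substance has already been absorbed into Proposition \ref{prop:involutive2}, and behind it into Proposition \ref{prop:diagram} (which provides the duality isomorphism $\xi^i_{V^\bullet}$ computing $H^i(U, R\underline{\Hom}(V^\bullet,k))$ as $\Hom_k(H^{d-i}(U,V^\bullet),k)$) and Lemma \ref{lemma:iso-criterion} (which reduces being an isomorphism in $D(G)$ to an isomorphism on all $H^i(U,-)$). The only point deserving an explicit sentence is the legitimacy of invoking Proposition \ref{prop:involutive2} for each of the two subgroups separately, which is unproblematic since that proposition holds for every open pro-$p$ torsion free $U$.
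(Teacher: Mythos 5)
Your proposal is correct and is precisely the argument the paper intends: the corollary is stated immediately after Proposition \ref{prop:involutive2} with no separate proof because the characterization of $D(G)^a$ by the $U$-independent condition ``$\eta_{V^\bullet}$ is an isomorphism'' makes the independence immediate. Your explicit comparison of the two subgroups $U_1, U_2$ just spells out this one-line deduction.
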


What is the relation between the subcategories $D_{adm}(G)$ and $D(G)^a$? We had observed earlier that a representation $V$ in $\Mod(G)$ is admissible if and only if the vector space $H^0(U,V)$ is finite dimensional. Moreover, by \cite{Em2} Lemma 3.3.4, we have the following fact.

\begin{lemma}\label{lemma:Emerton}
   If $V$ in $\Mod(G)$ is an admissible representation in $\Mod(G)$ then all the vector spaces $H^i(U,V)$, for $i \geq 0$, are finite dimensional.
\end{lemma}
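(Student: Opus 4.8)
The plan is to reduce the statement to something already known, namely Kohlhaase's finiteness results combined with the spectral sequence from Proposition \ref{prop:SS}. The cleanest route I would take is via the second spectral sequence $H^i(U,S^j(V)) \Longrightarrow \Ext^{i+j}_{\Mod(U)}(V,k)$. Since $V$ is admissible, Kohlhaase's Cor.\ 3.15 (already invoked in the proof of Lemma \ref{lemma:admissible}) tells us that each $S^j(V)$ is again an admissible $G$-representation; and for admissible representations the case $i=0$ of the finiteness we want is exactly the fact that $W^U = H^0(U,W)$ is finite dimensional for admissible $W$. This handles the bottom row $j$ arbitrary, $i=0$, but we need all $i$.

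To get all $i$, I would feed the duality isomorphism \eqref{f:H-duality-iso} into the picture. For admissible $V$ we know $\Ext^{d-i}_{\Mod(U)}(V,k) \cong H^i(U,V)^\vee(\chi_G)$, so proving $H^i(U,V)$ finite dimensional for all $i$ is equivalent to proving $\Ext^*_{\Mod(U)}(V,k)$ finite dimensional in every degree. Now run the spectral sequence $H^i(U,S^j(V)) \Longrightarrow \Ext^{i+j}_{\Mod(U)}(V,k)$: the abutment in degree $n$ has a finite filtration whose graded pieces are subquotients of $\bigoplus_{i+j=n} H^i(U,S^j(V))$. Since $U$ is a compact $p$-adic Lic group of cohomological dimension $d$ and each $S^j(V)$ is admissible, it would suffice to know that $H^i(U,W)$ is finite dimensional for every admissible $W$ and every $i$ — but that is precisely the statement we are trying to prove, so this is circular unless one imports the external input.

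Hence the real content must come from \cite{Em2} Lemma 3.3.4, exactly as the statement of the lemma advertises. So the proof I would actually write is essentially a one-line invocation: an admissible smooth representation $V$ of $G$ is, in particular, admissible as a representation of the compact open subgroup $U$, and Emerton's Lemma 3.3.4 asserts that for a compact $p$-adic analytic group the continuous cohomology $H^i(U,V)$ of an admissible smooth representation is finite dimensional in every degree $i$; equivalently, via Pontrjagin duality $V \mapsto V^\vee$, this is the statement that $\Tor_i^{\Omega(U)}(k,V^\vee)$ — or $\Ext^i_{\Omega(U)}(k,V^\vee)$ — is finite dimensional, which holds because $V^\vee$ is finitely generated over the noetherian ring $\Omega(U)$ (Kohlhaase Prop.\ 1.9(i)) and $k$ admits a resolution by finitely generated free $\Omega(U)$-modules of length $d$ (Remark \ref{rem:global-dim}), so each cohomology space is a subquotient of a finitely generated $\Omega(U)/(\text{maximal ideal})$-module, hence finite dimensional.

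The main obstacle, such as it is, is purely bookkeeping: one must be careful that the relevant cohomology $H^i(U,V)$ computed in $\Mod(U)$ (i.e.\ $\Ext^i_{\Mod(U)}(k,V)$) agrees with the continuous/pseudocompact cohomology $\Ext^i_{\Mod(\Omega(U))}(k,V^\vee)$ appearing in Emerton's and Kohlhaase's setup — but this identification was already recorded via \eqref{f:K-Kohlhaase} and the Pontrjagin duality equivalence $\Mod(U)^{op} \simeq \Mod_{pc}(\Omega(U))$ together with \eqref{f:Ext-iso}, since $k$ is finitely generated over $\Omega(U)$. So in practice the proof reduces to citing \cite{Em2} Lemma 3.3.4 and noting that an admissible $G$-representation is admissible as a $U$-representation; no genuinely new argument is needed.
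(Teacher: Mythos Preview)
The paper gives no proof at all for this lemma; it is simply stated as a citation of \cite{Em2} Lemma~3.3.4, and your proposal lands on exactly the same reference after (correctly) discarding the circular spectral-sequence route. So the approaches agree.

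One small correction to your supplementary sketch: the finite-dimensionality does \emph{not} follow from resolving $k$ by finitely generated free $\Omega(U)$-modules. If you compute $\Ext^i_{\Omega(U)}(k,V^\vee)$ or $\Tor_i^{\Omega(U)}(k,V^\vee)$ that way, the terms you get are copies of $V^\vee$, which is finitely generated over $\Omega(U)$ but certainly not finite dimensional over $k$. The argument that works is the one you also mention but then don't use: Pontrjagin duality identifies $H^i(U,V)=\Ext^i_{\Mod(U)}(k,V)$ with $\Ext^i_{\Omega(U)}(V^\vee,k)$; now resolve $V^\vee$ (finitely generated over the noetherian ring $\Omega(U)$) by finitely generated frees and apply $\Hom_{\Omega(U)}(-,k)$, which yields a complex of finite dimensional $k$-vector spaces.
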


This lemma says that, for an admissible $V$, the complex $RH^0(U,V)$ lies in $D_{fin}(k)$. By the Example 1 on p.\ 68 in \cite{Har} the functor $RH^0(U,-)$ is way-out in both directions. Therefore \cite{Har} Prop.\ I.7.3(iii) implies that the functor $RH^0(U,-)$ maps $D_{adm}(G)$ to $D_{fin}(k)$. This proves the following.

\begin{proposition}\label{respects-subcat}
  $D_{adm}(G) \subseteq D(G)^a$. 
\end{proposition}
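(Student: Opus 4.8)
The plan is to reduce the inclusion to a single property of the functor $RH^0(U,-)$. Recall from the reformulation stated just after the definition of $D(G)^a$ that a complex $V^\bullet$ is globally admissible exactly when $RH^0(U,V^\bullet)$ lies in $D_{fin}(k)$; this is immediate since $h^i(RH^0(U,V^\bullet)) = H^i(U,V^\bullet)$ for all $i$. So it suffices to show that $RH^0(U,-)\colon D(G) \to D(k)$ carries $D_{adm}(G)$ into $D_{fin}(k)$. The first thing I would verify is the base case of a single admissible representation $V$ viewed as a complex concentrated in degree $0$: the cohomology of $RH^0(U,V)$ in degree $i$ is then $H^i(U,V)$, and Lemma \ref{lemma:Emerton} says all of these are finite dimensional, so $RH^0(U,V) \in D_{fin}(k)$.

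Next I would propagate this from the abelian category to the derived category by d\'evissage. By Remark \ref{rem:global-dim} the functor $H^0(U,-)$ has finite cohomological dimension $d$, hence $RH^0(U,-)$ is way-out in both directions in the sense of \cite{Har} \S I.7. Since $\Mod_{adm}(G)$ is a Serre subcategory of $\Mod(G)$ and $D_{adm}(G)$ is by definition the triangulated subcategory of complexes with cohomology in it — and, symmetrically, $D_{fin}(k) \subseteq D(k)$ is associated in the same way to the Serre subcategory of finite-dimensional vector spaces in $\Vect$ — the lemma on way-out functors \cite{Har} Prop.\ I.7.3(iii) applies: a way-out functor sending the objects of the source Serre subcategory into the target subcategory of the derived category sends the whole associated triangulated subcategory there. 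This gives $RH^0(U,-)(D_{adm}(G)) \subseteq D_{fin}(k)$, which by the previous paragraph is exactly $D_{adm}(G) \subseteq D(G)^a$.

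As a more concrete alternative route I would note that, for $V^\bullet \in D_{adm}(G)$, the hypercohomology spectral sequence $E_2^{i,j} = H^i(U,h^j(V^\bullet)) \Rightarrow H^{i+j}(U,V^\bullet)$ is confined to the strip $0 \le i \le d$ by finiteness of the cohomological dimension, so it converges with only finitely many nonzero terms in each total degree; each $E_2^{i,j}$ is finite dimensional by Lemma \ref{lemma:Emerton} applied to the admissible representation $h^j(V^\bullet)$, whence each $H^n(U,V^\bullet)$ is a finite iterated extension of finite-dimensional spaces, hence finite dimensional. The only point requiring real care — and the main, if modest, obstacle — is that $V^\bullet$ need not be bounded, so one must make sure the way-out d\'evissage (respectively the construction and convergence of the spectral sequence) is valid for unbounded complexes; this is precisely what the finite cohomological dimension of $RH^0(U,-)$ guarantees.
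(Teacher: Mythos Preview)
Your proposal is correct and follows essentially the same argument as the paper: reduce to showing $RH^0(U,-)$ maps $D_{adm}(G)$ into $D_{fin}(k)$, verify this on single admissible representations via Lemma~\ref{lemma:Emerton}, and then invoke the way-out lemma \cite{Har} Prop.\ I.7.3(iii) using the finite cohomological dimension of $H^0(U,-)$. Your spectral-sequence alternative is also valid; the paper instead notes as its alternative that one can combine Prop.~\ref{prop:involutive} and Prop.~\ref{prop:involutive2}.
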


Alternatively this can be seen by combining Prop.\ \ref{prop:involutive} and Prop.\ \ref{prop:involutive2}. On the subcategory $D^+(G)$ of bounded below complexes we have a stronger result.

\begin{proposition}\label{characterize-subcat}
   A complex $V^\bullet$ in $D^+(G)$ lies in $D_{adm}(G)$ if and only if $H^i(U,V^\bullet)$ is finite dimensional for any $i \in \mathbb{Z}$; i.e., we have $D^+(G) \cap D_{adm}(G) = D^+(G) \cap D(G)^a$.
\end{proposition}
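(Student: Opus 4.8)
The plan is to prove both implications. One direction is already contained in Proposition \ref{respects-subcat}: if $V^\bullet \in D_{adm}(G)$ then $V^\bullet \in D(G)^a$, so all $H^i(U,V^\bullet)$ are finite dimensional; this direction needs no boundedness hypothesis. The content is the converse: a bounded-below complex all of whose $H^i(U,V^\bullet)$ are finite dimensional must have admissible cohomology representations. So assume $V^\bullet \in D^+(G)^a$; after shifting, say $V^\bullet$ is concentrated in degrees $\geq 0$. I would argue by induction on the cohomological degrees, peeling off cohomology representations one at a time, and the real work is to show the bottom nonzero cohomology representation $h^0(V^\bullet)$ is admissible.

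For the base case, let $n$ be the smallest index with $h^n(V^\bullet) \neq 0$; after a shift assume $n = 0$. The truncation triangle $h^0(V^\bullet)[0] \to V^\bullet \to \tau_{\geq 1}V^\bullet \to h^0(V^\bullet)[1]$ together with the long exact sequence for $H^*(U,-)$ (here $H^*(U,-)$ has cohomological dimension $d$ by Remark \ref{rem:global-dim}, and $\tau_{\geq 1}V^\bullet$ is concentrated in degrees $\geq 1$) gives $H^0(U,h^0(V^\bullet)) = H^0(U,V^\bullet)$, which is finite dimensional by hypothesis. Since $V := h^0(V^\bullet)$ is an honest smooth representation and $H^0(U,V) = V^U$ is finite dimensional, $V$ is admissible as recalled before Lemma \ref{lemma:admissible} (it suffices to check $V^U$ finite dimensional for a single compact open $U$). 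So $h^0(V^\bullet)$ is admissible. Now replace $V^\bullet$ by the cone $\tau_{\geq 1}V^\bullet$ of $h^0(V^\bullet)[0] \to V^\bullet$: this is again bounded below, and I must check it is still globally admissible. That is immediate from the long exact sequence $\cdots \to H^i(U,h^0(V^\bullet)) \to H^i(U,V^\bullet) \to H^i(U,\tau_{\geq 1}V^\bullet) \to H^{i+1}(U,h^0(V^\bullet)) \to \cdots$, using Lemma \ref{lemma:Emerton} to know $H^i(U,h^0(V^\bullet))$ is finite dimensional (as $h^0(V^\bullet)$ is admissible) and the hypothesis that $H^i(U,V^\bullet)$ is finite dimensional: finiteness of $H^i(U,\tau_{\geq 1}V^\bullet)$ follows. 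Iterating, each $h^i(V^\bullet)$ is peeled off as an admissible representation, and since being bounded below lets the induction proceed degree by degree, we conclude $V^\bullet \in D_{adm}(G)$.

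The one point requiring care is that this induction on degrees only terminates along $\mathbb{Z}_{\geq 0}$, which is exactly why boundedness below is needed: without it there is no "bottom" cohomology to start from, and the truncation triangle trick never gets off the ground. Concretely, the main obstacle is ensuring that after removing $h^0$ the residual complex still satisfies the global finiteness hypothesis; but as noted this is forced by Lemma \ref{lemma:Emerton} applied to the admissible $h^0(V^\bullet)$, which controls the correction terms in the long exact sequence. With that in hand the argument is a clean dévissage, and the stated equality $D^+(G) \cap D_{adm}(G) = D^+(G) \cap D(G)^a$ follows by combining this converse with Proposition \ref{respects-subcat}.
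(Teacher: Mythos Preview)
Your proposal is correct and follows essentially the same d\'evissage as the paper: identify the bottom cohomology $h^m(V^\bullet)$ via $H^0(U,h^m(V^\bullet)) \cong H^m(U,V^\bullet)$, conclude it is admissible, invoke Lemma~\ref{lemma:Emerton} to control its higher $U$-cohomology, and feed this into the long exact sequence of the truncation triangle to propagate global admissibility to $\tau_{\geq m+1}V^\bullet$; then iterate. The only cosmetic difference is that the paper phrases the first step via the pair of triangles for $\tau^{\leq m}$ and $\tau^{\geq m+1}$ rather than the single triangle $h^0(V^\bullet)[0] \to V^\bullet \to \tau_{\geq 1}V^\bullet$, but the content is identical.
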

\begin{proof}
The direct implication holds true by Prop.\ \ref{respects-subcat}. For the reverse implication we now assume that all the $H^i(U,V^\bullet)$ are finite dimensional.

Choose an integer $m$ such that $h^j(V^\bullet) = 0$ for any $j < m$. In this situation it is a standard fact (cf.\ \cite{KS} Exer.\ 13.3) that we have $H^0(U,h^m(V^\bullet)) = R^mH^0(U,V^\bullet) = H^m(U,V^\bullet)$. Hence $H^0(U,h^m(V^\bullet))$ is finite dimensional. As recalled before Lemma \ref{lemma:Emerton} this implies that $h^m(V^\bullet)$ is admissible. Moreover, Lemma \ref{lemma:Emerton} then says that $H^i(U,h^m(V^\bullet))$ is finite dimensional for any $i \in \mathbb{Z}$. We now use the distinguished triangles
\begin{equation*}
  \xymatrix{
       &  h^m(V^\bullet)[-m] \ar[dl]_{+1}             \\
 \tau^{\leq m-1} V^\bullet \ar[rr]  & & \tau^{\leq m} V^\bullet \ar[ul]       }
\ \text{and}\
  \xymatrix{
       & \tau^{\geq m+1} V^\bullet \ar[dl]_{+1}             \\
 \tau^{\leq m} V^\bullet \ar[rr]  & & V^\bullet \ar[ul]       }
\end{equation*}
in $D(G)$ (cf.\ \cite{KS} Prop.\ 13.1.15(i)). Since $\tau^{\leq m-1} V^\bullet \simeq 0$ in $D(G)$ the left triangle implies that $H^i(U, \tau^{\leq m} V^\bullet) \cong H^{i-m} (U, h^m(V^\bullet))$ is finite dimensional for any $i \in \mathbb{Z}$. Using this as an input for the long exact cohomology sequence associated with the right triangle we conclude that $H^i(U, \tau^{\geq m+1} V^\bullet)$ is finite dimensional for any $i \in \mathbb{Z}$ as well. But $h^j(\tau^{\geq m+1} V^\bullet) = 0$ for any $j \leq m$. Therefore we may repeat our initial reasoning for the complex $\tau^{\geq m+1} V^\bullet$ and deduce that $h^{m+1}(\tau^{\geq m+1} V^\bullet) = h^{m+1}(V^\bullet)$ is admissible. Proceeding inductively in this way we obtain that $h^j(V^\bullet)$ is admissible for any $j \in \mathbb{Z}$.
\end{proof}

\begin{lemma}\label{lemma:i-i}
   For any $V^\bullet$ in $D(G)$ and any $i \in \mathbb{Z}$ we have: If $H^i(U,V^\bullet) = 0$ then $h^i(V^\bullet) = 0$.
\end{lemma}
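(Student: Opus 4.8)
The plan is to reduce — via a forgetful functor and truncation — to a bounded complex over a compact group, and then to argue by induction on its length.

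First I would reduce to $G=U$: both $h^i(V^\bullet)$ and $H^i(U,V^\bullet)$ depend only on the image of $V^\bullet$ under the exact forgetful functor $D(G)\to D(U)$, so I may assume $G=U$, compact, torsion free and pro-$p$. Then $\mathbf{X}_U=k$, the functor $H^*(U,-)$ is $\Ext^*_{\Mod(U)}(k,-)$, and, since $\Omega(U)$ has global dimension $d$ (the fact underlying Remark \ref{rem:global-dim}), one has $\Ext^j_{\Mod(U)}(-,-)=0$ for $j>d$. Next I would reduce to a bounded complex by checking that $H^i(U,V^\bullet)\cong H^i\big(U,\tau^{\ge i-d}\tau^{\le i}V^\bullet\big)$ while $h^i$ is unchanged: a complex with cohomology in degrees $\ge i+1$ has vanishing hypercohomology in degrees $\le i$ (its hypercohomology spectral sequence vanishes there), so $\tau^{\le i}V^\bullet\to V^\bullet\to\tau^{\ge i+1}V^\bullet\to[1]$ gives $H^i(U,\tau^{\le i}V^\bullet)\xrightarrow{\cong}H^i(U,V^\bullet)$; and a complex with cohomology in degrees $\le i-d-1$ has vanishing hypercohomology in degrees $\ge i$, because on its $E_2$-page $p+q\le d+(i-d-1)=i-1$ — here one uses $\Ext^{>d}=0$ — so the triangle for $\tau^{\le i-d-1}(\tau^{\le i}V^\bullet)$ supplies the remaining isomorphism in degrees $i$ and $i+1$. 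Thus I may assume $V^\bullet$ is bounded with cohomology concentrated in $[i-d,i]$, so that $h^i(V^\bullet)$ is its top cohomology.

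I would then induct on the amplitude of $V^\bullet$. If $V^\bullet$ is a single representation in degree $i$ the claim is clear, as $H^i(U,V^\bullet)=h^i(V^\bullet)^U$ vanishes precisely when $h^i(V^\bullet)=0$ (because $U$ is pro-$p$ and $\chara k=p$). In general put $A:=\tau^{\le i-1}V^\bullet$, which is strictly shorter, and $W:=h^i(V^\bullet)$, and use the triangle $A\to V^\bullet\to W[-i]\xrightarrow{\delta}A[1]$: since $H^{i-1}(U,W[-i])=0$, the long exact sequence for $H^*(U,-)$ shows that the hypothesis $H^i(U,V^\bullet)=0$ forces $H^i(U,A)=0$ and forces the connecting map $W^U=H^0(U,W)\to H^{i+1}(U,A)$ to be injective. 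The remaining task is to conclude $W=0$.

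The hard part is exactly this: showing that the connecting map cannot be injective when $W\neq0$. Unwinding $\delta$ along the $\Ext$-filtration coming from the cohomology representations $h^\ell(V^\bullet)$ for $i-d\le\ell\le i-1$, and following the connecting map through the filtration steps, I expect the matter to reduce to a vanishing statement about a single smooth representation $B$ (one of those $h^\ell$): if $H^p(U,B)=0$ for some $p\ge1$, then $H^{p+1}(U,B)=0$. Via Pontrjagin duality and \eqref{f:K-Kohlhaase} this becomes a rigidity statement for the $\Ext$-groups $\Ext^*_{\Mod(\Omega(U))}(k,B^\vee)$ over the local noetherian ring $\Omega(U)$ of finite global dimension — a minimal-resolution / Nakayama argument — whose point is that the only cohomology a nonzero $B$ is compelled to carry is $H^0(U,B)=B^U\neq0$. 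Granting it, the connecting map annihilates $W^U$, hence $W^U=0$ and $W=h^i(V^\bullet)=0$, which closes the induction. This last representation-theoretic input is where the real work lies; the rest is formal manipulation of the hypercohomology spectral sequence and the truncation triangles.
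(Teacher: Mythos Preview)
Your reductions to $G=U$ and to a bounded complex are correct, and your rigidity claim --- that $H^p(U,B)=0$ forces $H^{q}(U,B)=0$ for all $q\ge p$ --- is true (take a minimal injective resolution of $B$: the differentials kill socles, so $H^j(U,B)=(I^j)^U$, and $I^p=0$ implies $I^q=0$ for $q\ge p$). But the gap is not the rigidity itself, it is your \emph{use} of it. You need the connecting map $W^U\to H^{i+1}(U,A)$ to have nonzero kernel, and I do not see how rigidity for the individual pieces $h^\ell(A)$ delivers this. Filtering $A$ by its cohomology expresses $H^{i+1}(U,A)$ as an iterated extension of the groups $H^{i+1-\ell}(U,h^\ell(A))$, but you have no vanishing input for those: from $H^i(U,V^\bullet)=0$ you only extract $H^i(U,A)=0$, and the inductive hypothesis applied to the shorter complex $A$ yields nothing beyond the tautology $h^i(A)=0$. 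The ``unwinding $\delta$ through the filtration'' step is where the proposal actually breaks down, not the rigidity lemma it was meant to feed into.

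The argument the paper defers to (\cite{DGA} Prop.~5) avoids induction by applying the minimal-resolution idea to the complex $V^\bullet$ itself rather than to its cohomology objects one at a time. After your reductions, replace $V^\bullet$ by a minimal bounded complex of injectives $I^\bullet$ in $\Mod(U)$ --- equivalently, dualize to $\Mod_{pc}(\Omega(U))$ and take a minimal bounded complex of projectives, which is available because $\Omega(U)$ is local of finite global dimension. Minimality means the differentials kill socles, so $(I^\bullet)^U$ has zero differential and $H^i(U,V^\bullet)=(I^i)^U$. This vanishes iff $I^i=0$, and then $h^i(V^\bullet)=h^i(I^\bullet)=0$ immediately. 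Your rigidity statement is exactly the single-module instance of this observation; what you are missing is that the same device, applied once to the whole complex, gives the lemma in one stroke without any induction on amplitude.
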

\begin{proof}
This is almost literally the same proof as the one for the reverse implication in \cite{DGA} Prop.\ 5.
\end{proof}

\begin{corollary}\label{coro:finitedim1}
  Any globally admissible complex $V^\bullet$ in $D(G)$ such that $H^i(U,V^\bullet) = 0$ for any sufficiently small $i$ lies in $D^+(G)$ and hence in $D_{adm}(G)$.
\end{corollary}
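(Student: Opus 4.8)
The plan is simply to concatenate the two results that immediately precede this corollary, namely Lemma \ref{lemma:i-i} and Proposition \ref{characterize-subcat}.

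First I would apply Lemma \ref{lemma:i-i} degree by degree. By hypothesis there is an integer $N$ with $H^i(U,V^\bullet)=0$ for all $i<N$, and the lemma then forces $h^i(V^\bullet)=0$ for all $i<N$. This is exactly the statement that $V^\bullet$ belongs to $D^+(G)$.

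Once $V^\bullet\in D^+(G)$ has been established, the second step is to invoke Proposition \ref{characterize-subcat}. Since $V^\bullet$ is globally admissible by assumption, every $H^i(U,V^\bullet)$ is finite dimensional; combined with $V^\bullet\in D^+(G)$ this places $V^\bullet$ in $D^+(G)\cap D(G)^a$, which by Proposition \ref{characterize-subcat} equals $D^+(G)\cap D_{adm}(G)$. Hence $V^\bullet\in D_{adm}(G)$, as desired.

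I do not expect any genuine obstacle here: all the substance is carried by Lemma \ref{lemma:i-i} and Proposition \ref{characterize-subcat}, and the corollary is just their assembly. The only point requiring a moment of care is that ``sufficiently small $i$'' must be read uniformly, i.e.\ as a single threshold $N$ below which the vanishing holds; this is precisely what both the hypothesis and the pointwise application of Lemma \ref{lemma:i-i} supply.
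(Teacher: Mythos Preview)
Your proposal is correct and matches the paper's intent exactly: the corollary is stated without proof there, as it is immediate from Lemma \ref{lemma:i-i} (giving $V^\bullet \in D^+(G)$) followed by Proposition \ref{characterize-subcat} (giving $V^\bullet \in D_{adm}(G)$). Your reading of ``sufficiently small $i$'' as a uniform threshold is also the intended one.
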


\begin{corollary}\label{coro:finitedim2}
   $D_{adm}^b(G)$ is the subcategory of all complexes $V^\bullet$ in $D(G)$ whose total cohomology $H^*(U,V^\bullet)$ is finite dimensional.
\end{corollary}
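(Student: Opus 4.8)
The plan is to prove that $D_{adm}^b(G)$ consists exactly of those complexes $V^\bullet$ with $\dim_k H^*(U,V^\bullet) < \infty$, by combining the finiteness characterizations already established. One inclusion is nearly immediate: if $V^\bullet \in D_{adm}^b(G)$, then $V^\bullet$ is bounded with admissible cohomology in finitely many degrees, so by Lemma \ref{lemma:Emerton} each $H^i(U,V^\bullet)$ is finite dimensional, and the spectral sequence $H^i(U,h^j(V^\bullet)) \Rightarrow H^{i+j}(U,V^\bullet)$ (or simply the standard hypercohomology argument, since $RH^0(U,-)$ has cohomological dimension $d$ and only finitely many $h^j$ are nonzero) shows $H^*(U,V^\bullet)$ is finite dimensional in total.

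For the reverse inclusion, suppose $H^*(U,V^\bullet) = \bigoplus_i H^i(U,V^\bullet)$ is finite dimensional. Then in particular each $H^i(U,V^\bullet)$ is finite dimensional, so $V^\bullet \in D(G)^a$; and finiteness of the \emph{total} cohomology forces $H^i(U,V^\bullet) = 0$ for all but finitely many $i$, in particular for all sufficiently small $i$. By Corollary \ref{coro:finitedim1} this already places $V^\bullet$ in $D^+(G) \cap D_{adm}(G)$. To upgrade boundedness below to genuine boundedness, I would invoke Lemma \ref{lemma:i-i}: since $H^i(U,V^\bullet) = 0$ for all sufficiently large $i$ as well, that lemma gives $h^i(V^\bullet) = 0$ for all sufficiently large $i$, so $V^\bullet \in D^b(G)$; combined with admissible cohomology this yields $V^\bullet \in D_{adm}^b(G)$.

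The one point requiring a little care is the direction producing finiteness of total cohomology from a bounded admissible complex: one must make sure there is no subtlety in passing between $D^b_{adm}(G) \subseteq D^b(G)$ and the hypercohomology spectral sequence, but since $V^\bullet$ is bounded and $RH^0(U,-)$ is way-out in both directions (Example 1 on p.\ 68 of \cite{Har}) with finite cohomological dimension, this is routine. I expect the main (already-resolved) obstacle to have been Lemma \ref{lemma:i-i} and Corollary \ref{coro:finitedim1}; granting those, the present corollary is essentially a bookkeeping exercise assembling Lemma \ref{lemma:Emerton}, Proposition \ref{characterize-subcat}, Corollary \ref{coro:finitedim1}, and Lemma \ref{lemma:i-i}. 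So the proof I would write is: observe total finiteness is equivalent to ``each $H^i$ finite'' together with ``$H^i = 0$ for $|i|$ large''; apply Corollary \ref{coro:finitedim1} to get into $D_{adm}(G) \cap D^+(G)$; apply Lemma \ref{lemma:i-i} to get boundedness above; and for the converse use Lemma \ref{lemma:Emerton} plus the finiteness of the number of nonvanishing cohomology degrees.
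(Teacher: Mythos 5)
Your proof is correct and follows essentially the same route as the paper: the forward direction via Lemma \ref{lemma:Emerton} and the hypercohomology spectral sequence, and the reverse direction via Proposition \ref{characterize-subcat} and Lemma \ref{lemma:i-i} (your passage through Corollary \ref{coro:finitedim1} is just a repackaging of those two ingredients, with a second application of Lemma \ref{lemma:i-i} to get boundedness above). No gaps.
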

\begin{proof}
The direct implication is a consequence of Lemma \ref{lemma:Emerton} using the hypercohomology spectral sequence. The reverse implication follows from Prop.\ \ref{characterize-subcat} and Lemma \ref{lemma:i-i}.
\end{proof}

\begin{remark}\label{rem:compact-adm}
  If $G$ is compact then the natural functor $D^+(\Mod_{adm}(G)) \xrightarrow{\simeq} D_{adm}^+(G) := D^+(G) \cap D_{adm}(G)$ is an equivalence.
\end{remark}
\begin{proof}
This follows from \cite{Em2} Prop.\ 2.1.9 and \cite{Har} Prop.\ I.4.8.
\end{proof}

\bigskip

\noindent {\it{E-mail addresses}}: {\texttt{pschnei@wwu.de}, {\texttt{csorensen@ucsd.edu}}

\noindent {\sc{Peter Schneider, Math. Institut, Universit\"a{}t M\"{u{nster, M\"{u}nster, Germany.}}

\noindent {\sc{Claus Sorensen, Dept. of Mathematics, UC San Diego, La Jolla, USA.}}


\begin{thebibliography}{B-GAL}

\bibitem[Bru]{Bru}
Brumer A.: \emph{Pseudocompact Algebras, Profinite Groups and Class Formations}. J.\ Algebra 4, 442-470 (1966)

\bibitem[Em1]{Em1}
Emerton M.: \emph{Ordinary parts of admissible representations of $p$-adic reductive groups I. Definition and first properties}. Ast\'erisque 331, 355-402 (2010)

\bibitem[Em2]{Em2}
Emerton M.: \emph{Ordinary parts of admissible representations of $p$-adic reductive groups II. Derived functors}. Ast\'erisque 331, 403-459 (2010)

\bibitem[Har]{Har}
Hartshorne R.: \emph{Residues and Duality}. Springer Lect.\ Notes Math.\ vol.\ 20, 1966

\bibitem[KS]{KS}
Kashiwara M., Shapira P.: \emph{Categories and sheaves}. Springer Grundlehren vol.\ 332, 2006

\bibitem[Koh]{Koh}
Kohlhaase J.: \emph{Smooth duality in natural characteristic}. Adv.\ Math.\ 317, 1-49 (2017)

\bibitem[Koz]{Koz}
Koziol K.: \emph{Hecke module structure on first and top pro-$p$-Iwahori Cohomology}. Acta Arithmetica 186, 349-376 (2018)

\bibitem[Lur]{Lur}
Lurie J.: \emph{Higher Algebra}. Preprint 2012

\bibitem[NSW]{NSW}
Neukirch J., Schmidt A., Wingberg K.: \emph{Cohomology of Number Fields}. $2^{nd}$ Ed., Springer Grundlehren vol.\ 323, 2008

\bibitem[Oes]{Oes}
Oesterl\'e J.: \emph{Nombres de Tamagawa et groupes unipotents
en caract\'eristique $p$}. Invent.\ math.\ 78, 13-88 (1984)

\bibitem[OS]{OS}
Ollivier R., Schneider P.: \emph{The modular pro-$p$ Iwahori-Hecke $\Ext$-algebra}.
Proc.\ Symp.\ Pure Math.\ 101, 255 - 308 (2019)

\bibitem[SS]{SS}
Scherotzke S., Schneider P.: \emph{Derived parabolic induction}. Preprint 2020

\bibitem[pLG]{pLG}
Schneider P.: \emph{$p$-Adic Lie Groups}. Springer Grundlehren vol.\ 344, 2011

\bibitem[DGA]{DGA}
Schneider P.: \emph{Smooth representations and Hecke modules in characteristic $p$}. Pacific J. Math.\ 279, 447-464 (2015)

\bibitem[SSt]{SSt}
Schneider P., Stuhler U.: \emph{Representation theory and sheaves on the Bruhat-Tits building}. Publ.\ Math.\ IHES 85, 97-191 (1997)

\bibitem[CG]{CG}
Serre J-P: \emph{Cohomologie Galoisienne}. Springer Lect.\ Notes Math. vol.\ 5, 1973

\bibitem[Vig]{Vig}
Vigneras M.-F.: \emph{Repr\'esentations $\ell$-modulaires d'un groupe r\'eductifs $p$-adiques avec $\ell \neq p$}. Progress Math.\ vol.\ 131, Birkh\"auser 1996

\bibitem[Yek]{Yek}
Yekutieli A.: \emph{Derived Categories}. Cambridge Univ.\ Press 2020


\end{thebibliography}
\end{document}